\newtheorem{theorem}{Theorem}%
\newtheorem{proposition}{Proposition}
\crefname{proposition}{Proposition}{Propositions}
\crefname{corollary}{Corollary}{Corollaries}
\newtheorem{lemma}{Lemma}
\crefname{lemma}{Lemma}{Lemmata}
\theoremstyle{definition}
\newtheorem{definition}{Definition}%
\crefname{definition}{Definition}{Definitions}
\crefname{example}{Example}{Examples}
\newtheorem{assumption}{Assumption}
\crefname{assumption}{Assumption}{Assumptions}
\newtheorem{component}{Component}
\crefname{component}{Component}{Components}
\theoremstyle{remark}
\newtheorem{remark}{Remark}
\crefname{remark}{Remark}{Remarks}
\xpatchcmd{\proof}{#1}{\ifstrequal{#1}{\proofname}{\proofname}{\proofname{} #1}}{}{}
\crefname{assumption}{Assumption}{Assumptions}
\newcommand{\xmid}{\mathop{|}}
\def\lstAZ{A, B, C, D, E, F, G, H, I, J, K, L, M, N, O, P, Q, R, S, T, U, V, W, X, Y, Z}
\def\lstaz{a, b, c, d, e, f, g, h, i, j, k, l, m, n, o, p, q, r, s, t, u, v, w, x, y, z}
\def\lstAZBB{B, C, D, E, F, G, H, I, J, K, L, M, N, O, P, Q, R, T, U, V, W, X, Y, Z}
\newcommand{\MkScr}[1]{\expandafter\def\csname s#1\endcsname{\mathscr{#1}}}
\newcommand{\MkUp}[1]{\expandafter\def\csname u#1\endcsname{\mathrm{#1}}}
\newcommand{\MkFrak}[1]{\expandafter\def\csname f#1\endcsname{\mathfrak{#1}}}
\newcommand{\MkCal}[1]{\expandafter\def\csname c#1\endcsname{\mathcal{#1}}}
\newcommand{\MkBB}[1]{\expandafter\def\csname #1#1\endcsname{\mathbb{#1}}}
\lstAZ\do{%
	\expandafter\MkScr \i  %
	\expandafter\MkFrak \i  %
	\expandafter\MkUp \i %
	\expandafter\MkCal \i  %
		  }    
\lstaz\do{%
	\expandafter\MkUp \i   }    
\lstAZBB\do{%
	\expandafter\MkBB \i     }
\DeclareMathOperator{\E}{\mathbb{E}}
\let\Pr\relax
\DeclareMathOperator{\Pr}{\mathbb{P}}
\let\P\Pr
\let\epsilon\varepsilon
\newcommand{\ind}{\mathbf{1}}
\newcommand{\dd}{\mathrm{d}}
\newcommand*{\tran}{{\mkern-1.5mu\mathsf{T}}}
\newcommand{\lf}{{\textsc{LF}}}
\newcommand{\frog}{{\mathrm{FROG}}}
\newcommand{\flip}{{\mathrm{FLIP}}}
\newcommand{\fresh}{{\mathrm{FRESH}}}
\newcommand{\ep}{\epsilon}
\newcommand{\States}{S}
\newcommand{\invo}{{\mathfrak{s}}}
\newcommand{\A}{a}
\newcommand{\B}{b}
\let\tbl\caption
\def\figcaptionfont{\reset@font\fontsize{7}{8}\selectfont}
\def\tablecaptionfont{\reset@font\fontsize{11}{13}\selectfont\itshape\leftskip\tableleftskip\rightskip\tablerightskip}%
\def\tablecaptionnumfont{\reset@font\fontsize{11}{13}\selectfont}
\def\tabnotefont{\reset@font\fontsize{9}{11}\selectfont\leftskip\tabledim\rightskip\tabledim}%
\newdimen\figheight
\newdimen\figwidth
\newif\ifFP
\newif\ifCO
\renewcommand\thefigure{\@arabic\c@figure}
\def\fps@figure{tbp}
\def\ftype@figure{1}
\def\ext@figure{lof}
\def\fnum@figure{\figurename\ \thefigure}
\def\ftype@affils{1}
\newdimen\tempadimen
\def\@makefigurecaption#1#2{\figcaptionfont%
     \ifdim\figwidth >17.5pc%
        \tempdimen\hsize
    \advance\tempdimen-\figwidth
    \divide\tempdimen by 2
    \tempadimen\figwidth
     \else%
    \tempdimen8.75pc
    \tempadimen17.5pc
  \fi
  \hskip\tempdimen\vbox{\hsize\tempadimen\hskip-.5pt\figcaptionfont\leftskip0pt plus .5fil \rightskip0pt plus -.5fil{#1}\ #2\par\parfillskip0pt plus 1fill}%
}
\def\epsfbox{}
\newbox\figtempbox
\def\figurebox#1#2#3{%
    \@ifnextchar[{\@figurebox{#1}{#2}{#3}}{\@figurebox{#1}{#2}{#3}[]}}
\def\figuresize#1{\gdef\@figtot{#1}}
\let\@figtot\@empty
\def\@figurebox#1#2#3[#4]{%
      \def\temp{#1}%
      \def\tempa{#2}%
      \ifx\temp\@empty%
      \else%
      \fi%
      \ifx\tempa\@empty%
      \else%
      \fi%
      \gdef\CO{CO}
      \gdef\FP{FP}
      \gdef\@thirdarg{#3}%
      \gdef\@frtharg{#4}%
      \ifx\@frtharg\empty
      \ifx\temp\@empty%
         \global\figheight10pc
      \else%
         \global\figheight#1
      \fi%
      \ifx\tempa\@empty%
         \global\figwidth10pc
      \else%
         \global\figwidth#2
      \fi%
         \ifx\@thirdarg\empty \FPfalse\COfalse\fi
      \else 
         \ifx\@thirdarg\empty \FPfalse\COfalse
         \else
           \ifx\@thirdarg\FP \FPtrue
           \else
             \ifx\@thirdarg\CO \COtrue
             \else
             \fi
           \fi
         \fi  
         \setbox\figtempbox=\hbox{\epsfbox{\ifCO #3-\else\ifFP#3-\fi\fi\ifx#1\@empty#2\@empty\includegraphics{#4}\else\includegraphics[scale=\@figtot]{#4}\fi}}%
         \global\figwidth=\wd\figtempbox
         \global\figheight=\ht\figtempbox
       \fi
       \ifdim\figwidth>14.75pc%
          \ifdim\figwidth<20pc%
            \centerline{\vbox to 0pt{\hsize\figwidth{\figbox}}}%
          \else
            \vbox to 0pt{\centerline{\figbox}}%
          \fi
          \else%
            \vbox to 0pt{\centerline{\figbox}}%
       \fi%
     \vskip-\baselineskip
     \ifodd\c@page
       \vtop to \figheight{\vfill\llap{\hskip0.5pc}\vfill}%
     \else
       \vtop to \figheight{\vfill\llap{\hskip0.5pc}\vfill}%
     \fi
}%
\def\fpofigbox#1{\FPtrue\def\@fpo{#1}}
\def\whiteink{}
\def\blackink{}
\def\figbox{%
     \ifx\@frtharg\empty  
       \noindent\vbox{\hsize\figwidth%
          \hrule\hbox to\figwidth{\vrule\hfill\vbox to\figheight{\hsize\figwidth\vfill}\vrule}\hrule}%
     \else
        \vbox{\vskip.8pt\hsize\figwidth
          \hbox to\figwidth{\vbox to\figheight{\hsize\figwidth\box\figtempbox}}}%
     \fi
     \ifCO\else\ifFP\vbox to 0pt{\vskip-.6\figheight\llap{\hbox to \figwidth{%
             \hfill\blackink\vrule height20pt width220pt depth5pt\whiteink
             \llap{\fontsize{26}{26}\selectfont\bfseries FPO scaled at \@fpo\%}\blackink\hfill}}}%
     \FPfalse\fi\fi%
}
\begin{document}

\title{Rebalancing Markov jump processes for non-reversible continuous-time sampling}

\author[Jansson, Schauer, Seyer, and Sharma]{E.~JANSSON, M.~SCHAUER, R.~SEYER \and A.~SHARMA}
\address{Department of Mathematical Sciences, Chalmers University of Technology and University of Gothenburg, 41296 Gothenburg, Sweden}
\keywords{Markovian Monte Carlo; Non-reversible sampling; Locally-balanced Markov processes}
\email{rubense@chalmers.se}
\subjclass{Primary 65C05; secondary 60J22, 60J25}

\begin{abstract}
Markov chain Monte Carlo methods are central in computational statistics, and typically rely on detailed balance to ensure invariance with respect to a target distribution.
Although straightforward to construct by Metropolization, this can induce diffusion-like exploration of the sample space, requiring careful tuning of parameters such as step size.
We introduce a general mechanism for constructing non-reversible continuous-time samplers, without requiring detailed balance. 
Our approach transforms jump processes satisfying a skew-detailed balance condition for a reference measure into processes sampling a target measure absolutely continuous with respect to it.
Unbounded balancing functions allow such samplers to dynamically select favourable transitions.
We establish invariance under weak criteria and demonstrate how to verify geometric ergodicity.
Numerical experiments demonstrate that the resulting samplers are more robust to parameter tuning.
\end{abstract}

\maketitle
\markboth{E.~Jansson, M.~Schauer, R.~Seyer and A.~Sharma}{Rebalancing Markov jump processes for non-reversible continuous-time sampling}

\section{Introduction}
\label{sec:intro}

Markovian Monte Carlo methods enable sampling from a distribution specified by an unnormalized probability density, for example a posterior distribution in Bayesian inference.
The most common device used to construct such samplers is Metropolization \citep{metropolis_equation_1953,hastings_monte_1970}, the introduction of an accept/reject step in the process to enforce a detailed balance condition with respect to the target distribution.
However, the resulting reversibility of the sampler can introduce random walk behaviour leading to slow mixing, requiring careful tuning of sampler hyperparameters \citep{neal_mcmc_2011,andrieu_peskuntierney_2021}.
This is a driving motivation behind the recent interest in \emph{non-reversible} Monte Carlo methods that potentially overcome these drawbacks, see e.g.\@ \citet{fearnhead_piecewise_2018,power_accelerated_2019,andrieu_peskuntierney_2021,gagnon_theoretical_2024}.

The main contribution of this paper is a rebalancing device that turns a Markov jump process satisfying a skew-detailed balance condition for a reference measure into a sampler for a target measure that is absolutely continuous with respect to the reference measure.
The resulting samplers are not only non-reversible, but achieves the minimal rejection rate in the class of such Markov jump samplers.
This can result in significantly reduced rejection rates compared to their Metropolized counterparts.
Furthermore, the continuous-time construction avoids the computational effort of repeated proposals resulting from repeated rejections,
as well as allowing direct composition of different sampler dynamics.

The main principle underlying the proposed mechanism is skew-detailed balance \citep{power_accelerated_2019,vucelja_liftingnonreversible_2016,turitsyn_irreversible_2011,yaglom_statistical_1949}.
Based on this principle, a general framework for discrete-time non-reversible Markov chain samplers with Metropolization is laid out in \cite{andrieu_peskuntierney_2021}.
Our device instead extends the rebalancing of \citet{zanella_informed_2020} to replace Metropolization. 
A similar approach in a discrete state space setting is taken by \citet{power_accelerated_2019}; see also \cite{hukushima_irreversible_2013} for spin models, \cite{gagnon_nonreversible_2020} for nested model selection, and \cite{schauer_causal_2024} for momentum accelerated sampling on graphs.
We will further review related work in conjunction with the discussion of particular examples of sampler dynamics.
After the first availability of our work, the formalism of the class of Markov jump processes under study in this paper was greatly improved in the reversible case by \cite{livingstone_foundations_2025}, and our work continues the study of the non-reversible case.

\section{Rebalancing device}
\label{sec:rebalancing}
\subsection{Markov jump processes}
First, we introduce the necessary notation and definitions.
Let \((x)^+ = \max\{x,0\}\).
Following \cite{kallenberg_foundations_2002}, let the state space $S$ be a Polish space. Denote by  $\mathcal{B}(S)$ the Borel $\sigma$-algebra on $S$. 
A Markov jump process on $S$ is a continuous-time stochastic process $(Z_t)_{t\geq 0}$ on $S$ with almost surely right-continuous paths that are constant except at isolated jumps.	
We only consider time-homogeneous processes, where $\Pr(Z_t \in B|Z_s = a) = \Pr(Z_{t-s} \in B|Z_0 = a)$ for all $0 \le s \le t$. %
We denote the jump times by $(\tau_n)_{n \in \mathbb{N}}$. The law of the process is determined by
the initial distribution $Z_0 \sim \pi_0$ on $(S, \mathcal B(S))$,
the rate function $\lambda \colon S \to [0,\infty)$, and
the jump kernel $\kappa \colon S \times  \mathcal{B}(S) \to [0,1]$, such that $\Pr(Z_{\tau_{i+1}} \in B \xmid Z_{\tau_{i}} = a) = \kappa(a,B)$ for all $a, B \in S \times \mathcal{B}(S)$.

Conditional on $Z_{\tau_i} = a$, the time until the next jump is exponentially distributed with rate $\lambda(a) = (\E[\tau_{i+1}-\tau_i|Z_{\tau_i} = a])^{-1}$.
This entails, by the Markov property, that $\tau_1/\lambda(Z_0),\ (\tau_{2} - \tau_1)/\lambda(Z_{\tau_1}), \dotsc$ form an independent sequence of $\mathsf{Exp}(1)$ random variables and
$Z_0, Z_{\tau_1}, Z_{\tau_2}, \dotsc$ an embedded discrete-time Markov chain with transition kernel $\kappa$.
We also define the rate kernel $\mu(a ,\,  B) = \lambda(a)\kappa(a ,B)$ as the specific rate of jumps from $a \in \States$ to $B \in \mathcal{B}(S)$.
Both the total rate $\lambda$ and the jump kernel $\kappa$ are determined through
\begin{equation*}
    \lambda(a) = \int_{b \in S} \mu (a , \dd b);\qquad \kappa (a, B) = \frac{\mu(a,B)}{\lambda(a)},\ 
B \in \mathcal{B}(S),\ \lambda(a) \neq 0.
\end{equation*}

\subsection{Skew-detailed balance and rebalancing}\label{sec:device}
The key idea is \emph{skew-detailed balance} \citep{andrieu_peskuntierney_2021,power_accelerated_2019}:
\begin{definition}
We say that a rate kernel $\mu$ is in skew-detailed balance with respect to an involution $\invo$ and a measure $\pi$ if 
\begin{equation}\label{eq:skew-db}
	\int_{S \times S} f(a,b) \pi(\dd a)\mu(a,\dd b) =  \int_{S \times S} f(\invo(a'),\invo(b')) \pi(\dd b') \mu(b',\dd a'),
\end{equation}
for all suitably integrable $f\colon S \times S \to \mathbb{R}$.
\end{definition}
\begin{remark}
    This condition is sometimes equivalently stated as
    \begin{equation*}
	\int_{A \times B} f(a,b) \pi(\dd a)\mu(a,\dd b) =  \int_{\invo (A \times B)} f(\invo(a'),\invo(b')) \pi(\dd b') \mu(b',\dd a'),
    \end{equation*}
for all suitably integrable $f\colon S \times S \to \mathbb{R}$ and all domains \(A \times B \subseteq{} \mathcal B(S \times S)\).
    Crucially, the domain is also transformed by the involution.
    Working with \(S \times S\) simplifies the proofs.
\end{remark}

To construct a sampler for a probability measure $\pi$ on $S$, we require three ingredients:
\begin{component}
A reference measure $\tilde \pi$ on $S$ such that $\pi \ll \tilde \pi$, i.e., $\pi$ is absolutely continuous with respect to $\tilde\pi$.
\end{component}

\begin{component}
An involution $\invo\colon S \to S$, that is, a map satisfying $\invo^2 = \operatorname{id}$, which is isometric under both $\pi$ and $\tilde \pi$, i.e., $\pi(A) = \pi(\invo(A))$ and $\tilde\pi(A) = \tilde\pi(\invo(A))$ for all $A \in \cB(S)$. 
\end{component}

\begin{component}
A rate kernel $\tilde \mu\colon S \times \mathcal{B}(S) \to [0,\infty)$ that is in skew-detailed balance with respect to $\invo$ and $\tilde \pi$.
\end{component}

We then, by a rebalancing device, obtain a Markov jump process \((Z_t)_{t \ge 0}\) with rate kernel \(\mu\) with the target measure \(\pi\) as a stationary measure, that is
\begin{equation}
    \label{eq:stat_def}
    \int_{a \in \States} \P(Z_t \in B \mid Z_s = a) \pi(\dd a) = \pi(B),
\end{equation}
where $ t > s > 0, B \in \mathcal B(S)$.
Under our assumptions, skew-detailed balance together with an additional \emph{semi-local condition} yield the necessary stationarity, by the following theorem proven in the supplement:

\begin{theorem}\label{prop:stationary}
	Let $\pi$ be a probability measure, $\invo$ be a $\pi$-isometric involution, and $\mu$ be a rate kernel with finite expected jump rate $\int_{S \times S} \pi(\dd a)\mu(a, \dd b) < \infty$.
    If a weaker form of \eqref{eq:skew-db} holds,
    \begin{equation}\label{eq:half-skew}
        \int_{S \times S} f(b) \pi(\dd a)\mu(a,\dd b) =  \int_{S \times S} f(\invo(b')) \pi(\dd b') \mu(b',\dd a')
    \end{equation}
    for all bounded measurable $f\colon S \to \mathbb{R}$,
    and the semi-local condition 
	\begin{equation}\label{eq:semi-local}
	\lambda(a) = \lambda(\invo(a)) , \quad a \in \States,
	\end{equation}
	holds with respect to $\invo$ and $\pi$, then there exists a nonexplosive Markov jump process with rate kernel equal to $\mu$ up to a modification on a \(\pi\)-null set.
    This process has $\pi$ as a stationary measure.
\end{theorem}

We now construct our rebalancing device such that it fulfils the above conditions, given in the following theorem proven in the supplement:
\begin{theorem}\label{thm:rebalancing}
	{Let $\pi$ be a probability measure on $S$ that is dominated by a $\sigma$-finite reference measure $\tilde \pi$, $\pi \ll 
	\tilde \pi$, and let $\invo$ be an isometric involution for both.}
	Let $\tilde\mu$ be a rate kernel in skew-detailed balance \eqref{eq:skew-db}  with respect to $\invo$ and $\tilde \pi$, such that $\int_{S\times S} \pi(da)\tilde \mu(a,db) = \int_S \pi(da)\tilde\lambda(a) < \infty$.
	Define
	\begin{equation}
	\lambda_{g}(a,b) =
	\begin{cases}
		g\left(\dfrac{\frac{\dd\pi}{\dd \tilde \pi}(b)}{\frac{\dd\pi}{\dd \tilde \pi}(a)}\right) & \text{if} \quad \frac{\dd\pi}{\dd \tilde \pi}(a) \neq  0, \\ 
		0  & \text{otherwise},
	\end{cases}
	\end{equation}
	where $g\colon [0,\infty)\to [0, \infty)$ is a monotone balancing function satisfying
	\begin{equation}\label{eq:balancing}
	g(0) = 0; \quad g(1) = 1; \quad g(t) = tg(1/t),\ t > 0.
	\end{equation}
	The Markov jump process with rate kernel
	\begin{align}\label{eq:balancedjumpkernel}
		\mu(a, \dd b) = \lambda_{g}(a,b)\tilde{\mu}(a, \dd b)  + \lambda_{\invo}(a) \delta_{\invo(a)}(\dd b),
	\end{align}
	where
	\begin{align*}
	  \lambda_{\invo}(a) =   \left(-\int\lambda_{g}(a,b)\tilde{\mu}(a, \dd b) \right.  + \left.\int \lambda_{g}(\invo(a),b)\tilde{\mu}(\invo(a), \dd b)\right)^+
	\end{align*}
	exists and has stationary measure $\pi$.
\end{theorem}
Examples of balancing functions include, but are not limited to the bounded Metropolis balancing function $g(t) = \min\{1,t\}$ and Barker balancing function $g(t) = 2t/(1+t)$, as well as the unbounded \(g(t) = \sqrt{t}\); but there are many ways of constructing them, see \citet[Remarks~2.1--2.3]{livingstone_foundations_2025}.
As is the case for detailed balance and Metropolization, the claim is only about stationarity and not about ergodicity or even irreducibility; it is possible that \(\tilde\mu\) or even \(\mu\) is transient.
To use the process as a sampler, ergodicity must ultimately be proven.

\begin{remark}
All bounded continuous balancing functions are essentially dominated by \(\min\{1,t\}\) up to a time rescaling factor \citep{andrieu_peskuntierney_2021}.
Boundedness imposes a maximum speed restriction in the tails of the target, and thus unbounded balancing functions are of particular interest.
Although the existence theory is significantly more complicated, they have shown promise in some settings.
In particular, \(g(t) = \sqrt t\) performs well when choosing among different kinds of transitions \citep{zhou_rapid_2022,schauer_causal_2024}.
\end{remark}
\begin{remark}
The semi-local condition \eqref{eq:semi-local} motivates the choice of \(\lambda_\invo\) in \cref{thm:rebalancing} as the minimal solution, similar to the choice in \citet{power_accelerated_2019} along the lines of \citet{turitsyn_irreversible_2011,vucelja_liftingnonreversible_2016}.
Naturally, any solution of the semi-local condition \eqref{eq:semi-local} will yield invariance by the proposition, but our choice is critical to achieve the minimal number of corrections using \(\invo\) necessary to maintain stationarity, which can be seen as a rejection if \(\tilde\mu \circ \invo \) can backtrack; in this sense the rebalanced sampler is rejection-minimal.
It also ensures only one of \(\lambda_\invo(a)\) and \(\lambda_\invo(\invo(a))\) is nonzero, avoiding the introduction of some reversibility from jumps between \(a\) and \(\invo(a)\).
\end{remark}
\begin{remark}
    The trivial involution \(\invo = \mathrm{id}\) always satisfies the hypotheses, but will instead produce a reversible process as \eqref{eq:skew-db} reduces to detailed balance, \(\lambda_\invo \equiv 0\), and one recovers the setting of \citet{livingstone_foundations_2025}.
\end{remark}

\subsection{Construction of rate kernels and compositionality}
A way to construct an appropriate rate kernel $\tilde \mu$ is to use a $\tilde \pi$-isometric deterministic bijective map that is skew-symmetric with respect to $\invo$, in the sense that 
\begin{equation}
\label{eq:Tskew}
T^{-1} = \invo \circ T \circ \invo,
\end{equation}
as shown by the following proposition proven in the supplement:
\begin{proposition}\label{prop:determistic kernel}
	Let $\tilde \pi$ be a measure and $\invo$ a $\tilde\pi$-isometric involution. 
	Suppose $T\colon S \to S$ is $\tilde \pi$-isometric, bijective, and satisfies \eqref{eq:Tskew}.
    Let $\tilde \lambda\colon S \to [0, \infty)$ be a rate function which is invariant under $\invo \circ T$, and define
	\begin{equation*}
	   \tilde\mu(a, \dd b) = \tilde \lambda(a)\delta_{T(a)}(\dd b),
	\end{equation*}
	where $\delta_x$ is the Dirac measure at $x$.
    Then the rate kernel $\tilde \mu$ is in skew-detailed balance with respect to $\invo$ and $\tilde \pi$.
\end{proposition}

Rate kernels can also be superposed to create new dynamics, allowing us to compose new dynamics from simple building blocks.
A remarkable property of the continuous-time construction is that one automatically obtains the correct mixture of the kernels preserving the skew-detailed balance. %
\begin{proposition}\label{lemma:compositionality}
    Let $\tilde \pi$ be a measure and $\invo$ a $\tilde\pi$-isometric involution.
    Suppose the rate kernels \(\tilde\mu_j\), \(j = 1,\dotsc,n\), are in skew-detailed balance with respect to $\invo$ and $\tilde\pi$.
    Then the superposition \(\tilde\mu(a, \dd b) = \sum_{j=1}^n \tilde\mu_j(a, \dd b)\) is in skew-detailed balance with respect to $\invo$ and $\tilde\pi$.
\end{proposition}
\begin{proof}
    Immediate by applying skew-detailed balance termwise.
\end{proof}

\subsection{Implementation}
The samplers obtained by the rebalancing device are all pure jump Markov processes, and so can be simulated in the usual way by a Doob--Gillespie algorithm.
After establishing ergodicity, we then estimate expectations with respect to the target by time averaging along the process trajectory.
However, we can obtain an additional variance reduction by replacing the inter-jump times by their expectations.
This is the final estimator which we use in practice, and this further implies the jump times need not be stored by the implementation, only the rates at each state.
\begin{proposition}
    If the Markov jump process \(Z\) corresponding to the rate kernel \(\mu\) is ergodic, has invariant measure \(\pi\), and \(\lambda > 0\), it holds for any \(\pi\)-integrable function \(f\) that
    \begin{align*}
        \E_{X \sim \pi}[f(X)] %
        &= \lim_{N \to \infty} \left.\left(\sum_{n=0}^N \frac{f(Z_{\tau_n})}{\lambda(Z_{\tau_n})}\right)\middle/\left(\sum_{n=0}^N \frac{1}{\lambda(Z_{\tau_n})}\right)\right.
    \end{align*}
    almost surely, for almost\ every \(Z_0\), where \(\tau_0 = 0\).
\end{proposition}
\begin{proof}
   By hypothesis, \(\pi\) is absolutely continuous with respect to the stationary measure for the jump chain. Thus, the statement follows by a direct application of the ergodic theorem and the Rao--Blackwell theorem under the above integrability conditions, see \cite{cappe_reversible_2003}.
\end{proof}
Geometric ergodicity implies a stronger version of the above proposition where the convergence holds for \emph{every} \(Z_0\) \citep{roberts_harris_2006}.

\section{Creating samplers using the rebalancing device}

We consider a target distribution proportional to \(e^{-U(q)}\dd q\), where \(U\colon \mathbb R^d \to \mathbb R\) is proportional to the negative log density with respect to the Lebesgue measure.
Requiring \(\invo\) to be isometric for this traget is very restrictive in general, so a \emph{lifting} is performed, augmenting the state space with a momentum variable \(p \in \mathbb R^d\) with a prescribed marginal distribution \(\rho(\dd p)\).
This allows freedom to incorporate symmetry in \(\rho\) corresponding to \(\invo\), and the samplers are then constructed with stationary distribution \(\pi(\dd(q,p)) \propto e^{-U(q)}\,\dd q\rho(\dd p)\).
This distribution admits the desired target measure as a marginal, allowing the estimation of expectations from trajectories of the
Markov process.
We construct two samplers, one based on Hamiltonian dynamics and the leapfrog integrator and one based on linear translations and random bounces. 

\begin{figure}[h]
    \figuresize{.66}
    \figurebox{}{\linewidth}{}[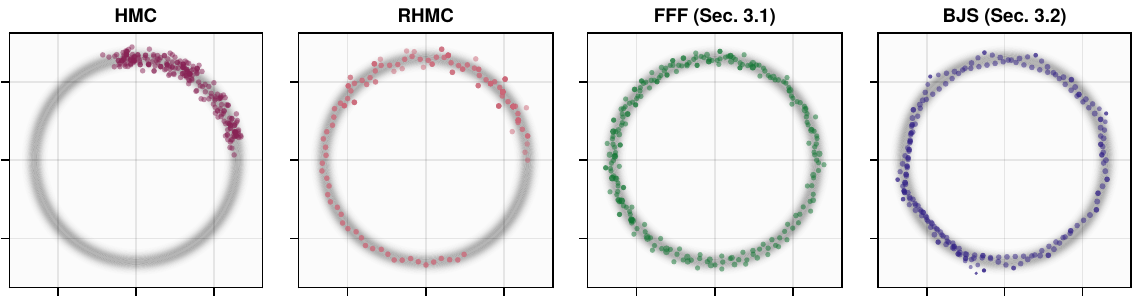]
    \caption{Suboptimal hyperparameters hinders the reversible HMC over short trajectories. Non-reversible methods are more robust, and by reducing rejections, our methods (FFF and BJS) explore further for the same computational cost.}
    \label{fig:donut}
\end{figure}

\subsection{Hamiltonian dynamics}
\label{sec:sampler}
Suppose further that \(\rho(\dd p) \propto e^{-K(p)}\dd p\), where \(K\colon\mathbb R^d \to \mathbb R\).
This suggests sampling the target using Hamiltonian dynamics, where \(U\) is potential energy and \(K\) is kinetic energy, with the stationary distribution described by the Hamiltonian \(H(q,p) = U(q) + K(p)\).
Hamiltonian dynamics are a popular base for Markov chain Monte Carlo methods.
The dynamics are governed by Hamilton's equations, which in this case become
\begin{equation}
\label{eq:hameq}
\dot{q} = \frac{\partial H}{\partial p} = \frac{\partial K}{\partial p},\qquad \dot{p} = -\frac{\partial H}{\partial q} = -\frac{\partial U}{\partial q}.
\end{equation}
The corresponding flow leaves \(H\) constant, and thus is a \(\pi\)-isometry.
Moreover, the dynamics are volume-preserving by Liouville's theorem, suggesting \(\tilde\pi\) as Lebesgue measure, and time-reversible; if the marginal momentum is standard Gaussian, that is \(K(p) = \frac{1}{2} \lVert p \rVert^2\), time-reversibility corresponds to a flip of the momentum sign, so that \(\invo(q,p) = (q, -p)\) satisfies \eqref{eq:Tskew} for the flow.
Unfortunately, unless \(U\) and \(K\) are particularly simple, it is not possible to solve \eqref{eq:hameq} exactly, and one must resort to numerical integration schemes.
The scheme should be \emph{symplectic} to preserve the key properties of the Hamiltonian dynamics, in our case volume preservation and time-reversibility.
The canonical choice is the leapfrog integrator, a second-order method that alternates between updating the momentum and the position variables; in the case of standard Gaussian momentum and a stepsize \(\epsilon\), it becomes
\begin{equation}
\label{eq:leapfrog}
\begin{aligned}
	p_{t + \epsilon/2} &= p_t - \frac{\epsilon}{2} \nabla U(q_t),  \quad
	q_{t + \epsilon} &= q_t + \epsilon p_{t + \epsilon/2},  \quad
	p_{t + \epsilon} &= p_{t + \epsilon/2} - \frac{\epsilon}{2} \nabla U(q_{t + \epsilon}). 
\end{aligned}
\end{equation}
We use the notation $\lf$ to denote the mapping from $(q_t, p_t)$ to $(q_{t + \epsilon}, p_{t + \epsilon})$ given by \eqref{eq:leapfrog}. 
For more details on Hamiltonian dynamics, the leapfrog methods and symplectic integration in general, see e.g.\@{} \cite{leimkuhler_simulating_2004,hairer_geometric_2006,bou-rabee_geometric_2018}.

To obtain our sampler, we select \(\tilde\lambda \equiv 1\) and apply \cref{prop:determistic kernel} with $\lf$, possibly iterated \(L\) times, followed by \cref{thm:rebalancing}.
Nevertheless, the resulting process in general fails to be ergodic.
For this reason, we also introduce refreshments by redrawing the momentum from \(\rho\) at exponential times with constant rate \(\lambda_{\rho} > 0\).
We call the resulting sampler the \emph{Flip-Frog-Fresh sampler} (FFF), given by the rate kernel
\begin{align*}
    \mu_{\mathrm{FFF}}((q,p), \dd(q',p')) &= g\left(e^{-(H(\lf^L(q,p))-H(q,p))}\right)\delta_{\lf^L(q,p)}(\dd(q',p')) \\
    &+ \lambda_\invo(q,p)\delta_{(q,-p)}(\dd(q',p')) + \lambda_\rho\delta_q(\dd q')\rho(\dd p').
\end{align*}

There is a rich literature of related work on extending Hamiltonian Monte Carlo through non-reversibility, continuous-time constructions, or both.
Non-reversibility in samplers based on Hamiltonian dynamics can manifest in numerous ways; some discrete-time examples that rely on skew-detailed balance are
\citet{horowitz_generalized_1991} introducing non-reversibility through partial refreshments and an extra \(\invo\)-flip;
\citet{sohl-dickstein_hamiltonian_2014} selecting between flips and multiple leapfrog steps, although the variant by \citet{campos_extra_2015} shows partial refreshments may reintroduce reversibility;
\citet{fang_compressible_2014} which extends to more variant dynamics;
\citet{thin_nonreversible_2021,andrieu_general_2020,andrieu_monte_2025} which consider more advanced proposal schemes, including using lifted proposals.
These are all instances of the general frameworks in \citet{andrieu_peskuntierney_2021}, making use of the same skew-symmetry of the leapfrog integrator.
Ultimately, working with discrete-time kernels means that some form of Metropolization appears, and there is a correspondence between rebalancing bounded balancing functions and rejections.
However, working in continuous time extends easily to unbounded balancing functions, for which no equivalent discrete chain exists.

Continuous-time versions of HMC have also been previously considered, particularly Randomized Hamiltonian Monte Carlo and numerical variants by \citet{bou-rabee_randomized_2017}, which leads to a very similar sampler that can preserve momentum across leapfrog jumps; however, the corresponding choice of \(\lambda_\invo(q,p) = 1 - \lambda_g(q,p)\) is not minimal, and incurs additional flips compared to the rebalancing device.
As the rebalancing device compares both forward and backward steps, this suggests higher acceptance rates can be achieved due to the oscillatory error in the leapfrog integrator \citep{campos_extra_2015}.

\begin{remark}
    In the special case of visiting or starting close to a mode, the FFF unlike HMC can leave at constant computational cost through the use of holding times.
    As the dimension \(d\) increases, Metropolization requires \(O(d^{1/4})\) rejections to leave a mode following step size scaling results for HMC \citep{neal_mcmc_2011}, yielding a polynomial increase in computational cost.
    Hence, the step size is generally scaled down with dimension to preserve the acceptance probability, but avoiding random walk behavior then requires increasing the number of integrator steps, so the computational cost is still incurred.
\end{remark}

Finally, we establish geometric ergodicity of the sampler under hypotheses on \(U\) comparable to those of \citet{durmus_geometric_2022}:
\begin{theorem}\label{thm:geometricergodicity}
	Assume that the potential $U \in C^{2}(\mathbb{R}^{d})$ satisfies the tail condition that for some $\kappa,r > 0$,  $U(x) - \frac\kappa2 \|x\|^2$ is convex for all $\lVert x \rVert > r$, and that $\nabla U$ is a Lipschitz function with Lipschitz constant $L_{\nabla U}$. 
	If the leapfrog integration step size $\epsilon$ satisfies the bound
	\begin{equation}\label{eq:stepsize}0 < \epsilon  < \frac{ 2 \sqrt{\kappa}}{L_{\nabla U} }
	\end{equation}
	and the balancing function satisfies the following growth condition %
	\begin{equation}\label{eq:FFFgrowth}
        g(t) \le \frac{ct}{\log t},\ t \ge 2,
	\end{equation}
    for some \(c > 0\),
	then FFF is geometrically ergodic (see \citealt{roberts_quantitative_1996} for a definition).
\end{theorem}
The proof is given in the supplement and follows the approach of \citet{meyn_stability_1993-1} as formulated in \citet{roberts_quantitative_1996}.
The continuous-time setup permits a fairly straightforward proof that relies entirely on balancing leapfrog jumps with refreshments.
Hence, it also applies to the aforementioned numerical version of Randomized HMC, answering a conjecture about its geometric ergodicity posed by \citet[Section~6.2]{bou-rabee_randomized_2017}.

\subsection{Bouncy Particle dynamics}\label{subsec_bouncy}
Piecewise deterministic Markov processes have received recent interest as a promising approach to non-reversible continuous-time sampling, see e.g.\@{} \citet{bouchard-cote_bouncy_2018,andrieu_peskuntierney_2021,bierkens_zig-zag_2019,bierkens_methods_2023-1}, which is also based on skew-detailed balance.
However, simulating them in practice is often computationally expensive without tight bounds on gradients of the target due to the use of thinning, and improving this is the subject of ongoing research \citep{bertazzi_approximations_2022-1,andral_automated_2024}.%
Here, we take inspiration from Bouncy Particle sampler (BPS) dynamics \citep{peters_rejection-free_2012, bouchard-cote_bouncy_2018} but use the computationally cheap Markov jump process structure instead, completely avoiding thinning.

We consider a standard Gaussian momentum marginal \(\rho(\dd p)\), although the following generalizes to other elliptically symmetric distributions, and define the reference measure \(\tilde\pi(\dd(q,p)) = \dd q \rho(\dd p)\).
The sampler combines two kernels,
linear steps with scale \(\epsilon\)
\begin{equation}
    \tilde\mu_{\mathrm{step}}((q,p),\dd(q',p')) = \delta_{(q + \epsilon p, p)}(\dd(q',p'))
\end{equation}
and reflections on the isodensity curves of the target potential
\begin{equation}
    \tilde\mu_{\mathrm{refl}}((q,p),\dd(q',p')) = \epsilon \langle p, \nabla U(q)\rangle^+ \delta_{\left(q,~ p - 2\frac{\langle p, \nabla U(q) \rangle}{\lVert \nabla U(q) \rVert^2}\nabla U(q)\right)}(\dd(q',p')),
\end{equation}
both in skew-detailed balance with respect to \(\invo(q,p) = (q,-p)\) and \(\tilde\pi\).
The rate of reflections is inspired by the reflection rate of the BPS, suppressing reflections on trajectories towards a mode where \(U\) is decreasing and vice versa, with the factor of \(\epsilon\) yielding the correct scaling.
Only a directional derivative is required to compute the rate, so the full gradient can be postponed until a reflection actually occurs.
We combine the rate kernels by \cref{lemma:compositionality} and rebalance using \cref{thm:rebalancing}.
Introducing independent refreshments on top, we obtain the \emph{Bouncy Jump sampler} (BJS), given by the rate kernel
\begin{align*}
    \mu_{\mathrm{BJS}}((q,p), \dd(q',p')) &= g\left(e^{-(U(q+\epsilon p)-U(q))}\right)\tilde\mu_{\mathrm{step}}((q,p),\dd(q',p'))
    + \tilde\mu_{\mathrm{refl}}((q,p),\dd(q',p')) \\
    &+ \lambda_\invo(q,p)\delta_{(q,-p)}(\dd(q',p')) + \lambda_\rho\delta_q(\dd q')\rho(\dd p').
\end{align*}

\begin{remark}
    The reflection is volume-preserving, but the choice of \(\tilde\pi\) also allows for stochastic refreshments of \(p\) on reflection as long as the reflection rate is a martingale with respect to the refreshment kernel, which enables ergodicity without separate refreshments \citep{vanetti_piecewise-deterministic_2018,michel_forward_2020}; for brevity we leave this to future work.
\end{remark}

The skew-detailed balance of linear walks have been exploited since \citet{gustafson_guided_1998}.
Augmenting these with reflections has been considered by e.g.\@{} \citet{vanetti_piecewise-deterministic_2018,park_markov_2020,sherlock_discrete_2022}, and is connected to both discretizations of the BPS as well as reflective slice sampling.
Other combinations of steps and reflections were also considered by \citet{ludkin_hug_2023}.
The preceding constructions all relied on Metropolization; here we instead apply the rebalancing device, which crucially allows better control of the rate of \(\invo\)-flips, in particular with the unbounded balancing function \(g(t) = \sqrt t\).
To illustrate this, we introduce two comparison processes: the Rebalanced Guided Walk (BGW, omitting \(\tilde\mu_{\mathrm{refl}}\)) and the Randomized Guided Walk (RGW, with Metropolization) respectively, inspired by \citet{gustafson_guided_1998,bou-rabee_randomized_2017}.
\begin{align*}
    \mu_{\mathrm{BGW}}((q,p), \dd(q',p')) &= g\left(e^{-(U(q+\epsilon p)-U(q))}\right)\tilde\mu_{\mathrm{step}}((q,p),\dd(q',p'))
    \\
    &+ \lambda_\invo(q,p)\delta_{(q,-p)}(\dd(q',p')) + \lambda_\rho\delta_q(\dd q')\rho(\dd p').
\end{align*}
\begin{align*}
    \mu_{\mathrm{RGW}}((q,p), \dd(q',p')) &= \min\left\{1,\middle.e^{-(U(q+\epsilon p)-U(q))}\right\}\tilde\mu_{\mathrm{step}}((q,p),\dd(q',p'))
    \\
    &+ \left(1-\min\left\{1,\middle.e^{-(U(q+\epsilon p)-U(q))}\right\}\right)\delta_{(q,-p)}(\dd(q',p')) + \lambda_\rho\delta_q(\dd q')\rho(\dd p').
\end{align*}
The following proposition proven in the supplement shows the impact of unbounded balancing and bounces on the scaling of the rate of \(\invo\)-flips:
\begin{proposition}\label{prop:bjslikessqrt}
    Assume that the potential \(U \in C^{3}(\mathbb{R}^{d})\).
    For sufficiently small \(\epsilon > 0\), in BJS with \(g(t) = \sqrt{t}\) we have \(\lambda_\invo \sim \cO(\epsilon^3)\), in BJS with \(g(t) = \min\{1,t\}\) we have \(\lambda_\invo \sim \cO(\epsilon^2)\), while BGW with \(g(t) = \min\{1,t\}\) or RGW has \(\lambda_\invo \sim \cO(\epsilon)\).
\end{proposition}

\section{Numerical experiments}
\label{sec:experiments}
We now test the constructed samplers on benchmark problems.
To distinguish the improvements in the rebalancing device from other continuous-time schemes and other forms of non-reversibility in general, we also run classical HMC (discrete-time and reversible, using the package by \citealt{xu_advancedhmcjl_2020}) and numerical RHMC (continuous-time and non-reversible but relies on Metropolized kernels).
The aim is not to provide an exhaustive benchmark of algorithms, but rather to illustrate the preceding theoretical discussion.
Our package implementing the sampler is available as \texttt{FFFSampler.jl}\footnote{Code available at \url{https://github.com/rubenseyer/FFFSampler.jl}}, %
and is compatible with the existing Julia \citep{bezanson_julia_2017} ecosystem of statistical packages, in particular \texttt{AbstractMCMC.jl} and \texttt{Turing.jl}, rendering it accessible for practitioners working within the Julia probabilistic programming framework.

The considered metrics are a subset of the recommendations by \citet{magnusson_posteriordb_2025} applied to each marginal distribution of the parameters.
To measure accuracy, we compare against a reference sample satisfying the criteria in \citet{magnusson_posteriordb_2025}, computing the Wasserstein \(2\)-distance between the reference and the sample empirical distribution.
To measure efficiency, we compute the effective sample size (ESS), which requires sufficient accuracy.
We discretize the trajectory and
use our Julia port of the \texttt{coda} estimator \citep{plummer_coda_2006}.
Although discretization can lose information from short jumps, it allows us to make a direct comparison with discrete HMC.
We then normalize the result by computational effort.
For the Hamiltonian dynamics, we measure the number of evaluations of \(\nabla U\), since using automatic differentiation for \(\nabla U\) the value of \(U\) is obtained simultaneously.
For the Bouncy Particle dynamics, the simpler comparison processes are gradient-free, so we measure in wall clock time since all implementations are by us.

\subsection{Bayesian logistic regression}
We consider a classical example: the posterior of a Bayesian logistic regression model for the German credit data \citep{hofmann_statlog_1994}, where the regression coefficients have zero-mean Gaussian priors with variance \(\sigma^2 = 100\):
\begin{equation*}
    U(\beta) = -\sum_i\left[y_i \langle X_{i*}, \beta\rangle - \log\left(1 + e^{\langle X_{i*}, \beta\rangle}\right)\right] + \frac{\lVert \beta \rVert^2}{2\sigma^2}
\end{equation*}
where \(X\) is a \(1000 \times 25\) design matrix with rows \(X_{i*}\), \(y\) are 0-1 outcomes, and \(\beta\) are the 25 regression coefficients including intercept.
We standardize the covariates to have zero mean and unit variance.
This simple benchmark model has also appeared in e.g.\@{} \citet{hoffman_no-u-turn_2014,durmus_high-dimensional_2019,park_markov_2020}.
The samplers are run starting from \(\beta = 0\) for approximately 250\,000 gradient evaluations (or jumps for the gradient-free methods), testing a grid of hyperparameters, each with 32 replicates.
In this experiment we consider the marginal posteriors of both the parameters and the parameters squared.

For the vast majority of the hyperparameters considered we have good convergence in Wasserstein \(2\)-distance, confirming the validity of the performance estimators.
\Cref{fig:robustness2d-german-ess} shows that 
while good performance for HMC requires tuning for a particular integration time \(L\epsilon\), the three non-reversible alternatives (RHMC, FFF with \(g(t) = \min\{1,t\}\), and FFF with \(g(t) = \sqrt t\)) perform well for a wide range of stepsizes at just \(L = 1\).
This robustness is even displayed by RHMC, although it cannot match the optimal performance of HMC due to excessive rejections at larger step sizes.
In \cref{fig:robustness1d-german} we compare the best slices of the previous figure; the FFF achieves a small but significant improvement of 18\% in the optimal configuration, and the robustness of the non-reversible alternatives clearly manifests as a flatter performance curve.
Moreover, the proportion of flips is considerably lowered by the rebalancing device.
Although the optimal configuration for HMC has a proportion of 0.32, close to the theoretical tuning advice, it is still double than that of the optimal configuration for FFF.
\begin{figure}[hbtp]
    \figuresize{.66}
    \figurebox{}{\linewidth}{}[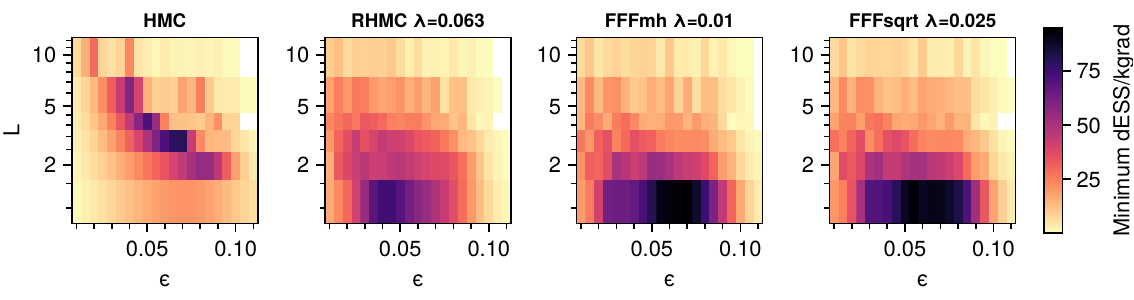]
    \caption{Minimum marginal ESS per \(10^3\) gradient evaluations (higher is better) on Bayesian logistic regression target for varying hyperparameters, averaged across replicates.}
    \label{fig:robustness2d-german-ess}
\end{figure}
\begin{figure}[hbtp]
    \figuresize{.66}
    \figurebox{}{\linewidth}{}[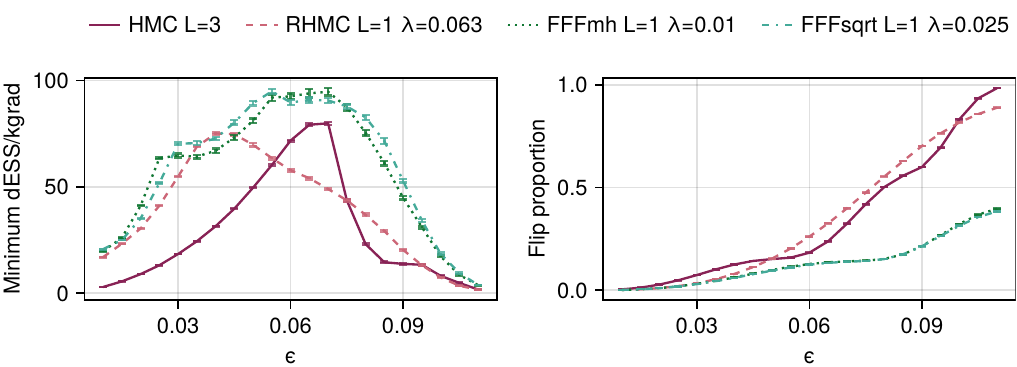]
    \caption{Comparison of minimum marginal ESS per \(10^3\) gradient evaluations (higher is better) and proportion of flips (lower is better) on Bayesian logistic regression target for varying \(\epsilon\) with best \(L\) and \(\lambda_\rho\), averaged across replicates. Error bars show \(\pm 2\) standard errors of the mean.}
    \label{fig:robustness1d-german}
\end{figure}

The linear walk dynamics in the BJS struggle with the varying scales in the posterior.
In \cref{fig:robustness2dbjs-german-ess}, we see that the introduction of bounces greatly improves the performance of the sampler, in particular when using \(g(t) = \sqrt t\), as predicted.
If the underlying dynamics limit exploration, the rebalancing device cannot automatically introduce it; the strength is the relative ease with which we could introduce the additional reflections with the correct scaling.
Nevertheless, \cref{fig:robustness1dbjs-german} shows that BJS does not use the gradient information as efficiently as Hamiltonian dynamics do.
\begin{figure}[hbtp]
    \figuresize{.66}
    \figurebox{}{\linewidth}{}[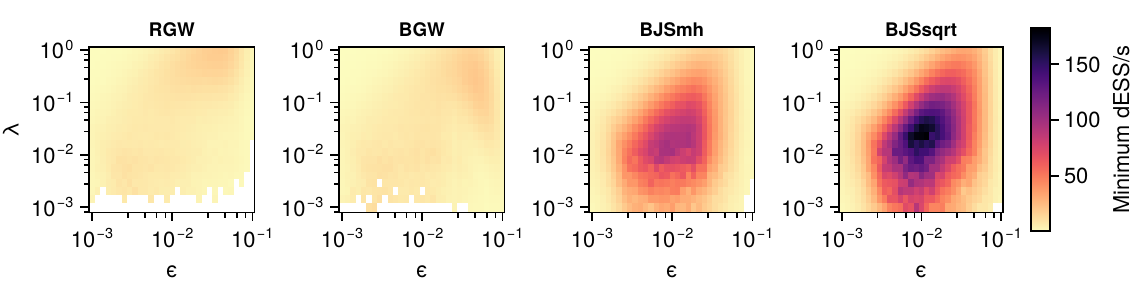]
    \caption{Minimum marginal ESS per wall clock second (higher is better) on Bayesian logistic regression target for varying hyperparameters, averaged across replicates.}
    \label{fig:robustness2dbjs-german-ess}
\end{figure}
\begin{figure}[hbtp]
    \figuresize{.66}
    \figurebox{}{\linewidth}{}[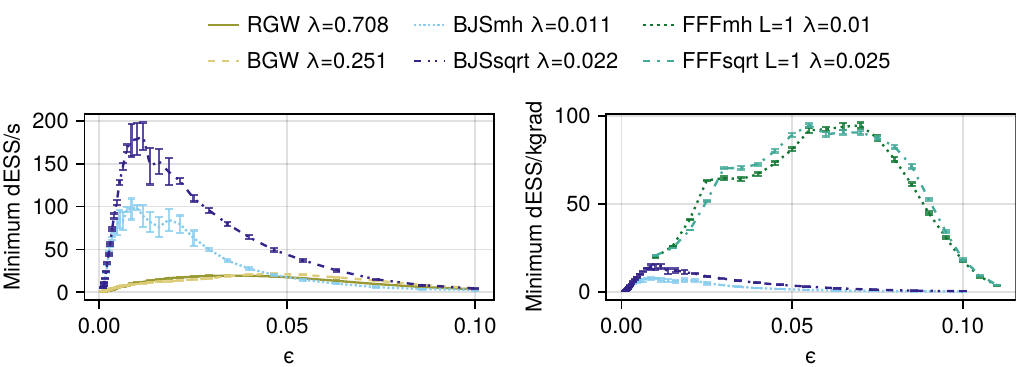]
    \caption{Comparison of minimum marginal ESS per wall clock second (higher is better) and per \(10^3\) gradient evaluations (higher is better) on Bayesian logistic regression target for varying \(\epsilon\) with best \(\lambda_\rho\), averaged across replicates. Error bars show \(\pm 2\) standard errors of the mean.}
    \label{fig:robustness1dbjs-german}
\end{figure}

\subsection{Rosenbrock banana}
We target the Rosenbrock ``banana'' potential
\begin{equation*}
    U(q_1,q_2) = \frac{1}{10}\left(100(q_2 - q_1^2)^2 + (q_1 - 1)^2\right)
\end{equation*}
\citep{goodman_ensemble_2010}, which has the probability mass concentrated on a quadratic ridge, and does not satisfy the assumptions of \cref{thm:geometricergodicity}.
Not only is the target non-isotropic, but the concentrated ridge will exacerbate slow mixing due to reversibility.
The samplers are run starting out in the tail at \([4.678,\, 4.678^2]\) for approximately 3\,500\,000 gradient evaluations, testing a grid of hyperparameters, each with 32 replicates.

We still see in \cref{fig:robustness2d-banana-ess} that a wider range of hyperparameters yield the best performance for the non-reversible samplers, in particular for both FFF variants.
Non-reversibility appears to allow almost an order of magnitude reduction in the number of steps and thus a more efficient sampling kernel.
Comparing both figures, for all samplers seemingly the best estimated efficiency comes with slightly reduced estimated accuracy, but the settings of FFF scoring well in accuracy are still more efficient than the equivalent HMC.
\begin{figure}[hbt]
    \figuresize{.66}
    \figurebox{}{\linewidth}{}[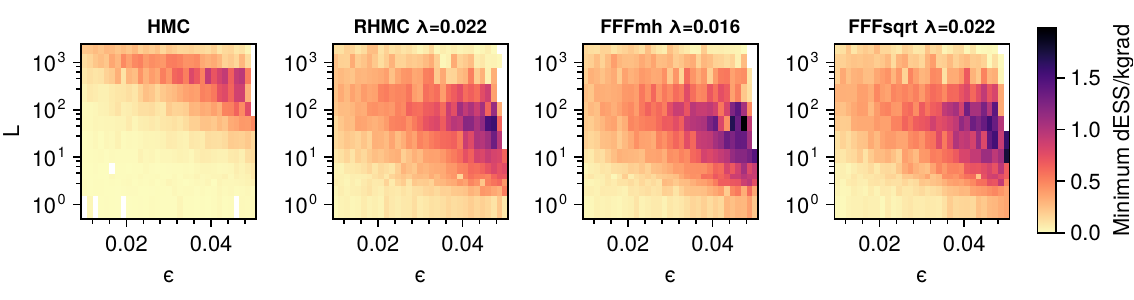]
    \caption{Minimum marginal ESS per \(10^3\) gradient evaluations (higher is better) on Rosenbrock banana target for varying hyperparameters, averaged across replicates.}
    \label{fig:robustness2d-banana-ess}
\end{figure}

\subsection{Bayesian PKPD model}
We target the \texttt{one\_comp\_mm\_elim\_abs} model in PosteriorDB \citep{magnusson_posteriordb_2025}, representing a one-compartment pharmacokinetic model:
\begin{gather*}
    \frac{\dd C}{\dd t} = - \frac{V_m}{V} \frac{C}{K_m + C} + \exp(-k_a t) \frac{D k_a}{V},\quad \hat C(t) \sim \log \mathsf{N}(C(t), \sigma)\\
    k_a, K_m, V_m, \sigma \overset{\text{i.i.d.}}{\sim} \mathsf{Cauchy}^+(0,1),\quad C(0) = 0 
\end{gather*}
where \(C(t)\) is the true concentration (mg/l), \(\hat C(t)\) is the measured concentration (mg/l), \(k_a\) is the dosing rate per day, \(K_M\) is the Michaelis--Menten constant (mg/l), \(V_m\) is the maximum elimination rate per day, \(D\) is the total dosage (mg), and \(V\) is the compartment volume (l).
The data consists of a series of measured concentrations at fixed times together with \(D\) and \(V\).
This ODE-based model has comparatively expensive likelihood evaluations requiring the solution of the differential equation, making it desirable that the number of leapfrog steps are kept low for performance.
Moreover, the constrained parameter space yields non-linear contours in several posterior marginals of pairs, introducing complicated geometry as in the Rosenbrock banana example; this effect is not limited to synthetic examples.
The samplers are run starting at \([0,\, 0,\, 0,\, {-2}]\) for approximately 250\,000 gradient evaluations, testing a grid of hyperparameters, each with 32 replicates.
Evaluation of the likelihood is done through Stan and BridgeStan \citep{stan_development_team_stan_2025,roualdes_bridgestan_2023}.

The conclusion from \cref{fig:robustness2d-posteriordb-ess} is the same as in the previous example: for similar efficiency, one can run shorter integration times requiring fewer steps.
In this case one can even go to very low \(L\) where HMC mixes slowly, which is particularly interesting if embedding the sampler in a wider statistical workflow, as it drastically reduces the number of slow likelihood evaluations per kernel evaluation.
\begin{figure}[hbt]
    \figuresize{.66}
    \figurebox{}{\linewidth}{}[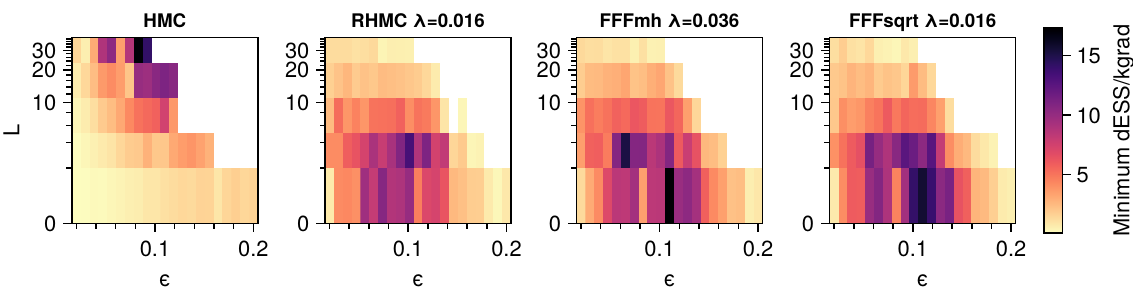]
    \caption{Minimum marginal ESS per \(10^3\) gradient evaluations (higher is better) on Bayesian PKPD target for varying hyperparameters, averaged across replicates.}
    \label{fig:robustness2d-posteriordb-ess}
\end{figure}

\FloatBarrier
\section*{Acknowledgement}
The authors thank Kasper Bågmark, Adrien Corenflos, Klas Modin, and Aila Särkkä for helpful discussions.

\section*{Funding}  %
EJ and AS acknowledge support by the Wallenberg AI, Autonomous Systems and Software Program (WASP) funded by the Knut and
Alice Wallenberg Foundation. 
EJ acknowledges the funding of the Knut and Alice Wallenberg Foundation through grant no. 2024.0440.
RS acknowledges support by foundations managed by The Royal Swedish Academy of Sciences and The Lars Hierta Memorial Foundation.
The numerical experiments were enabled by resources provided by Chalmers e-Commons at Chalmers as well as the National Academic Infrastructure for Supercomputing in Sweden (NAISS), partially funded by the Swedish Research Council through grant agreement no. 2022-06725.

\bibliographystyle{biometrika}
\bibliography{FFF}

\vskip3cm
\setcounter{section}{0}%
\renewcommand\thesection{S\arabic{section}}%
\renewcommand\theHsection{S\arabic{section}}%
\setcounter{theorem}{0}%
\renewcommand\thetheorem{S\arabic{theorem}}%
\renewcommand\theHtheorem{S\arabic{theorem}}%
\setcounter{definition}{0}%
\renewcommand\thedefinition{S\arabic{definition}}%
\renewcommand\theHdefinition{S\arabic{definition}}%
\setcounter{proposition}{0}%
\renewcommand\theproposition{S\arabic{proposition}}%
\renewcommand\theHproposition{S\arabic{proposition}}%
\setcounter{lemma}{0}%
\renewcommand\thelemma{S\arabic{lemma}}%
\renewcommand\theHlemma{S\arabic{lemma}}%
\setcounter{corollary}{0}%
\renewcommand\thecorollary{S\arabic{corollary}}%
\renewcommand\theHcorollary{S\arabic{corollary}}%
\setcounter{equation}{0}%
\renewcommand\theequation{S\arabic{equation}}%
\renewcommand\theHequation{S\arabic{equation}}%

\section{Rebalancing device: Proofs of \cref{sec:rebalancing}}
For a rate kernel \(\mu\) of a Markov jump process, we define the formal operator $\cA$ acting on a bounded measurable function $f \colon S \to \mathbb{R}$ by 
\begin{align}
	\label{eq:generatordefinition}
	\cA f(a) = \int_S (f(b)-f(a)) \mu(a, \dd b).
\end{align}
If the corresponding total rate \(\lambda\) is bounded, by a standard theorem \citep[Proposition~19.2]{kallenberg_foundations_2002} the operator \(\cA\) in fact corresponds to the generator of the jump process, and one could apply Hille-Yoshida theory \citep[Proposition~9.2]{ethier_markov_1986} to establish the invariant measure.
However, more specific results exist that hold even for unbounded \(\lambda\) as long as the jump process with rate kernel \(\mu\) exists for all time (non-explosion).

\begin{proof}[of \cref{prop:stationary}]
We first verify that
    \begin{equation}\label{eq:formalzero}
        \int_S \cA f(a) \pi(\dd a) = \int_{S \times S} (f(b) - f(a)) \mu(a,\,\dd b) \pi(\dd a) = 0
    \end{equation}
    for all bounded measurable test functions $f: S \to \mathbb R$.
	Recall the transformation formula for a bijection \(g: S \to S\):
	\[
	\int _{g^{-1}(A)}f\circ g\; \dd  \nu =\int _{A}f\;\dd (\nu \circ g^{-1}) = \int _{A}f(x) \nu(  g^{-1}(\dd x)).
	\]
	The proposition follows by splitting the integral over the difference,
    which is permitted by $f$ being bounded and $\mu(a, \dd b) \pi(\dd a)$ a finite measure from $\int \pi(\dd a) \mu(a, \dd b) < \infty$, and showing that the first term equals the second.
    Using \eqref{eq:half-skew} and the definition of the rate
	\[\int_{S \times S} f(\B) \mu(\A,\, \dd \B) \pi(\dd \A)
    = \int_{S \times S} f(\invo^{-1}(\B')) \mu(\B' ,\,  \dd \A') \pi(\dd \B')
    = \int_S  \lambda(\B') f(\invo^{-1}(\B'))\pi(\dd \B')\]
	and using \eqref{eq:semi-local} to substitute $\lambda(\B') = \lambda(\invo^{-1}(\B'))$ we obtain
	\[
    \int_S  \lambda(\B') f(\invo^{-1}(\B'))\pi(\dd \B')
	=\int_S \lambda(\invo^{-1}(\B')) f(\invo^{-1}(\B'))\pi(\dd \B')
	=\int_S \lambda(\B') f(\B')(\pi\circ \invo)(\dd \B')
	\]
	where we applied the transformation formula using that $\invo$ is a bijection.
    Now, as $\invo$ is $\pi$-isometric, we have
    \[
    \int_S \lambda(\B') f(\B')(\pi\circ \invo)(\dd \B')
    =\int_S \lambda(\A) f(\A)\pi(\dd \A)
    =\int_{S \times S} f(\A)\mu(\A,\,\dd\B)\pi(\dd \A).
    \]
	This cancels the second term, and we have established \eqref{eq:formalzero} for any bounded measurable test function $f$.

Recall that $\lambda(a) = \mu(a, S)$. Write $\mu(a, \dd b) = \lambda(a)\kappa(a, \dd b)$ with $\kappa(a, \dd b) = \mu(a, \dd b)/\lambda(a)$ if $\lambda(a) > 0$ and $\kappa(a, \dd b) = \delta_a$ otherwise. 
Consider the probability measure $\nu(da) \propto \lambda(a) \pi(da)$.
By \eqref{eq:formalzero},
\begin{equation*}
    \int \nu(\dd a) \kappa(a, \dd b) \propto \int \pi(\dd a)\lambda(a)\kappa(a, \dd b) = \int\pi(\dd a)\mu(a, \dd b) = \lambda(b)\pi(\dd b) \propto \nu(\dd b),
\end{equation*}
and $\nu$ is invariant for $\kappa$.  

Let $(Y_i)_{i \in \NN}$ be a Markov chain with kernel $\kappa$ and start in $Y_1 = a$ under the probability measure $\Pr_a$, $a \in S$; such a chain exists without further assumption by the theorem of Ionescu-Tulcea.  
Choose $A_n = \{\lambda(Y_i) \le n \text{ infinitely often}\}$.
By reverse Fatou's lemma
\[
\lim_{n\to \infty}  \Pr_\nu(A_n) \ge
\lim_{n\to \infty} \limsup_{i} \Pr_\nu(\lambda(Y_i) \le n) = 
\lim_{n\to \infty}  \nu(\lambda \le n) = 1
\]
and by $A_n^\complement \supset A^\complement_{n+1}$,
we have
\[
\Pr_\nu(\bigcap_n A^\complement_{n}) = \lim_n \Pr_\nu(A^\complement_n) = 0 .
\]
Moreover, $0 = \Pr_\nu(\bigcap_n A^\complement_{n}) =\int \Pr_a(\bigcap_n A^\complement_{n}) \nu(\dd a)$, so outside a $\nu$-null set $\mathcal N$ it holds $\Pr_a(\bigcap_n A^\complement_{n}) = 0$. Here $\mathcal N$ is also a $\pi$-null set as $Y$ is absorbed in points with $\lambda=0$.
Then $\pi$-almost surely
\[
\sum_n \frac{1}{\lambda(Y_n)} = \infty.
\]
This now allows to construct a non-explosive process $Z$ with invariant measure $\pi$.
For that, consider the Markov chain $Y'_t = Y_{\min(t, \sigma_{\mathcal N})}$ with $\sigma_{\mathcal N} = \inf\{n \in \NN : Y_n \in \mathcal N\}$.
Let
$\mu'(a, \dd b) = \mu(a, \dd b)\mathbf 1_{\mathcal  N^\complement}(a)$ and define $\lambda'$, $\kappa'$ accordingly.
Then $Y'$ has transition kernel $\kappa'$, still with invariant measure $\nu$. Using $Y'$ in the jump-hold construction, 
explosion $\Pr_a$-almost surely never occurs for any $a \in S$. So a non-explosive process $Z$ with start in $Z_0 = a$ under $\Pr_a$ can be constructed from $Y'$ with \citet[Proposition~12.18]{kallenberg_foundations_2002}.
The process $Z$ has rate kernel $\mu'$ and by  \citet[Proposition~12.23]{kallenberg_foundations_2002} and \eqref{eq:formalzero}  invariant measure $\pi$.
\end{proof}
\begin{remark}
In the preceding theorem, we have obtained almost sure non-explosion from stationarity alone.
With a stronger drift condition \citep[Condition~3.4]{zhang_nonexplosion_2018} one can go further:
If $S$ is Polish, there exists a measurable function $V$ taking values in $[0,\infty)$, a sequence of set $S_n \uparrow S$ such that the rate $\lambda$ is bounded on each $S_n$, $\lim_{n \to \infty} \inf_{x \in S\setminus S_n} V(x) \to \infty$, and there is $\alpha > 0$ such that
\(\mathcal A V(x) \le \alpha V(x)\) holds,
then non-explosion holds for $\mu$ for all points $a \in S$ and $\mu' = \mu$ can be taken in the proof above.
Thus sure non-explosiveness is obtained automatically from the drift conditions used to establish geometric ergodicity.
\end{remark}

To prove \cref{thm:rebalancing}, we require the following lemma:
\begin{lemma}
    If $\pi \ll \tilde \pi$ and $\invo$ is isometric for both, then $\tilde \pi$-almost everywhere
\begin{equation}\label{eq:radon-nikodym}
\frac{\dd \pi}{\dd \tilde \pi}\circ \invo 
= \frac{\dd \pi}{\dd \tilde \pi}.\end{equation}
\end{lemma}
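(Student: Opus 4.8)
The plan is to show that $h\circ\invo$ is itself a version of the Radon--Nikodym derivative $\dd\pi/\dd\tilde\pi$, after which \eqref{eq:radon-nikodym} follows from the $\tilde\pi$-a.e.\ uniqueness of that derivative. Write $h = \dd\pi/\dd\tilde\pi$, a nonnegative, $\tilde\pi$-a.e.\ defined measurable function, which exists by the Radon--Nikodym theorem since $\pi \ll \tilde\pi$; as $\invo$ is a bi-measurable bijection, $h\circ\invo$ is again measurable and $\invo(B)$ is measurable for every measurable $B$.

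Fix a measurable $B\subseteq S$. I would write $B = \invo^{-1}(\invo(B))$ and apply the transformation formula recalled in the proof of \cref{prop:stationary}, with $g=\invo$, test function $h$, and base measure $\tilde\pi$, to get
\[
\int_B (h\circ\invo)\,\dd\tilde\pi \;=\; \int_{\invo^{-1}(\invo(B))} h\circ\invo\,\dd\tilde\pi \;=\; \int_{\invo(B)} h\,\dd(\tilde\pi\circ\invo^{-1}).
\]
Since $\invo$ is isometric for $\tilde\pi$, the pushforward $\tilde\pi\circ\invo^{-1}$ equals $\tilde\pi$, so the right-hand side is $\int_{\invo(B)} h\,\dd\tilde\pi = \pi(\invo(B))$ by the defining property of $h$; and since $\invo$ is isometric for $\pi$, $\pi(\invo(B)) = \pi(B)$. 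Thus $\int_B (h\circ\invo)\,\dd\tilde\pi = \pi(B)$ for every measurable $B$, i.e.\ $h\circ\invo$ is a Radon--Nikodym derivative of $\pi$ with respect to $\tilde\pi$. By uniqueness up to $\tilde\pi$-null sets, $h\circ\invo = h$ $\tilde\pi$-almost everywhere, which is \eqref{eq:radon-nikodym}.

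I do not expect a genuine obstacle here; the only care needed is bookkeeping --- using the transformation formula in the correct direction (it is applied to the set $\invo(B)$, whose $\invo$-preimage is $B$) and unwinding the convention under which $\tilde\pi\circ\invo^{-1}$ denotes a pushforward, so that isometry of $\invo$ collapses it, and likewise reading $\pi$-isometry in the form $\pi(\invo(B)) = \pi(B)$. An equivalent route, if one prefers to argue with test functions, computes $\int f\circ\invo\,\dd\pi$ in two ways for bounded measurable $f$: as $\int f\,\dd\pi = \int f\,h\,\dd\tilde\pi$ by $\invo$-invariance of $\pi$, and as $\int (f\circ\invo)\,h\,\dd\tilde\pi = \int f\,(h\circ\invo^{-1})\,\dd\tilde\pi$ by $\invo$-invariance of $\tilde\pi$ after writing $(f\circ\invo)\,h = \bigl(f\cdot(h\circ\invo^{-1})\bigr)\circ\invo$; comparing gives $h = h\circ\invo^{-1}$ $\tilde\pi$-a.e., and re-applying $\invo$-invariance of $\tilde\pi$ yields $h\circ\invo = h$.
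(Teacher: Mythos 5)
Your proposal is correct and follows essentially the same route as the paper: both show that $\frac{\dd\pi}{\dd\tilde\pi}\circ\invo$ integrates over any measurable set to $\pi$ of that set (via the transformation formula, $\tilde\pi$-isometry of $\invo$ to collapse the pushforward, and $\pi$-isometry of $\invo$), and then invoke $\tilde\pi$-a.e.\ uniqueness of the Radon--Nikodym derivative. The only differences are cosmetic bookkeeping about whether one writes $\tilde\pi\circ\invo$ or $\tilde\pi\circ\invo^{-1}$, which the isometry assumption renders immaterial.
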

\begin{proof}
	The calculation 
\[
\int_A \frac{\dd \pi}{\dd \tilde \pi}(\invo(a'))  
 \tilde\pi(\dd a')
 = \int_{\invo(A)} \frac{\dd\pi}{\dd \tilde \pi}(a')  
 (\tilde\pi\circ \invo)(\dd a')
 =  \int_{\invo(A)} \frac{\dd \pi}{\dd \tilde \pi}(a')  
 \tilde\pi(\dd a') = \pi(\invo(A)) = \pi(A) 
\]
shows that $ \frac{\dd\pi}{\dd \tilde \pi}\circ  \invo$ is a.e.\@{} a Radon--Nikodym derivative of $\pi$ with respect to $\tilde\pi$.
\end{proof}

\begin{proof}[of \cref{thm:rebalancing}]
		First, $\mu$ is a rate kernel of a Markov jump process
        with finite expected rate under $\pi$: %
\[
\int  \pi(da)\mu(a, db) \le 2\int  \pi(da)\lambda_g(a, b)\tilde\mu(a, db) 
\]
and thus using $g(x) \le \max(1,x)$ (see \citealt[Lemma~2.1]{livingstone_foundations_2025})
\[\le %
2\int_U  \pi(da)\tilde \mu(a, db) 
+
2\int_{U^c}  \tilde\pi(da) {\frac{\dd\pi}{\dd \tilde \pi}(b)}  \tilde \mu(a, db)
\]
where $U = \left\{ \tfrac{\dd\pi}{\dd \tilde \pi}(b)\le\tfrac{\dd \pi}{\dd \tilde \pi}(a)\right\}$ and then
\[
=
2\int_U  \pi(da)\tilde \mu(a, db) 
+
2\int_{U^c}  \tilde\pi(db) {\frac{\dd\pi}{\dd \tilde \pi}(\invo(b))}  \tilde \mu(b, \dd a) = 2 \int  \pi(da)\tilde \mu(a, db), 
\]      
which is finite by assumption (which also justifies using \eqref{eq:skew-db} in the last step).
        Thus 
         by \cref{prop:stationary} it suffices to verify \eqref{eq:half-skew} and \eqref{eq:semi-local} to establish existence and stationarity.
		
		The choice of \(\lambda_\invo\) ensures that \eqref{eq:semi-local} is satisfied by construction.
        To see this, take without loss of generality $a$ with $\lambda_\invo(\invo(a)) = 0$ (otherwise simply apply \(\invo\)).
        It follows that
        \begin{equation*}
            \lambda_\invo(a) = -\int\lambda_{g}(a,b)\tilde{\mu}(a, \dd b) + \int \lambda_{g}(\invo(a),b)\tilde{\mu}(\invo(a), \dd b)
        \end{equation*}
        and we deduce
		\begin{align*}
			\lambda(a) &
		 =\lambda_\invo(a)  + \int_{S}  \lambda_g(a, b)\tilde \mu(a,\,  \dd b)    =  \int_S\lambda_g(\invo(a), b)\tilde \mu( \invo(a) ,\,  \dd  b)   = \lambda(\invo(a)).
		\end{align*}
		
		It remains to show \eqref{eq:half-skew}.
        For later  application of \eqref{eq:skew-db} we note that  
	
    $\tfrac{\dd \pi}{\dd \tilde \pi}(b)\lambda_g(b, a)
		$ is $\tilde\pi(\dd a)\tilde\mu(a,\,\dd b)$-integrable from the finite expected rate as well.
		Define the $\pi$-null set $\cN = \{a \in S\colon \frac{\dd\pi}{\dd \tilde \pi}(a) = 0\}$. For bounded measurable $f\colon S \to \RR$, integrating over $S \times S$,
		\begin{align*}
		\int f(b)  \lambda_g(a,b)\pi(\dd a)\tilde\mu(a,\,  \dd b) 
		& \stackrel{\mathclap{\eqref{eq:balancing}}}{=} 
		\int \ind_{\cN^\complement}(a) f(b) \pi(\dd a)\tilde\mu(a,\,  \dd  b) \dfrac{\frac{\dd\pi}{\dd \tilde \pi}(b)}{\frac{\dd \pi}{\dd \tilde \pi}(a)}\lambda_g(b,a)  \\
		&  = 
		\int    f(b) \tilde\pi(\dd a)\tilde\mu(a,\,  \dd b) \frac{\dd \pi}{\dd \tilde \pi}(b)\lambda_g(b,a) \\
		& \stackrel{\mathclap{\eqref{eq:skew-db}}}=
		\int   f(\invo(b')) \tilde\pi(\dd b') \tilde\mu(b' ,\,  \dd a') \frac{\dd\pi}{\dd \tilde \pi}(\invo(b'))\lambda_g(\invo(b'),\invo(a')) \\
		& \stackrel{\mathclap{\eqref{eq:radon-nikodym}}}=
		\int   f(\invo(b')) \tilde\pi(\dd b') \tilde\mu(b' ,\,  \dd a') \frac{\dd\pi}{\dd \tilde \pi}( b')\lambda_g(b',a') \\
		&  = 
		\int  f(\invo(b')) \pi(\dd b') \lambda_g(b',a')\tilde\mu(b' ,\,  \dd a')  
		.
		\end{align*}
		Therefore,
		\begin{align*}
		\int f( b) \pi(\dd a)\mu(a ,\,  \dd b) &= \int f(b)  \lambda_g(a,b)\pi(\dd a)\tilde\mu(a,\,  \dd b) + \int \lambda_{\invo}(a) f(\invo(a)) \pi(\dd a) \\
		&=
		\int  f(\invo(b')) \pi(\dd b') \lambda_g(b',a')\tilde\mu(b' ,\,  \dd a') +  \int \lambda_{\invo}(b') f(\invo(b')) \pi(\dd b')
		\\
		&  = 
		\int  f(\invo(b')) \pi(\dd b') \mu(b' ,\,  \dd a').
		\end{align*}
		 The proof is complete.
\end{proof}

\begin{proof}[of \cref{prop:determistic kernel}]
    For any $\tilde \pi \tilde\mu$-integrable $f\colon S \times S \to \RR$, a direct calculation with the change of variables \(a \mapsto (\invo \circ T)(b')\) yields
    \begin{align*}
        \int_{S \times S} f(a,b)\,\tilde \pi(\dd a)\,\tilde{\mu}(a,\dd b)
        &= \int_{S} f(a,T(a))\tilde \lambda(a)\,\tilde \pi(\dd a) \\
        &= \int_S f((\invo \circ T)(b'), (T \circ \invo \circ T)(b')) \tilde\lambda((\invo \circ T)(b'))\,\tilde\pi(\dd b') \\
        &= \int_S f((\invo \circ T)(b'), \invo(b')) \tilde\lambda(b')\,\tilde\pi(\dd b') \\
        &= \int_{S\times S} f(\invo(a'), \invo(b'))\,\tilde\pi(\dd b')\tilde\mu(b',\dd a')
    \end{align*}
    as required.
\end{proof}

One appealing transformation of the kernel is the replacement of \(\invo\)-events by some other \(\pi\)-isometric involution \(\mathfrak t\) that is more conducive to exploration, but does not satisfy skew-detailed balance.
Such transformations have appeared in e.g.\@{} \citet{sherlock_discrete_2022,vanetti_piecewise-deterministic_2018}, and is more broadly linked to the connection between lifting and delayed rejection \citep{andrieu_peskuntierney_2021}.
The following proposition shows this extension is also applicable in the rebalancing setting:
\begin{proposition}
    The conclusion of \cref{thm:rebalancing} holds even if instead of \eqref{eq:balancedjumpkernel}
    we define
    \[
        \mu(a, \dd b) = \lambda_{g}(a,b)\tilde{\mu}(a, \dd b)  + \min\left(\lambda_{\invo}(a),\lambda_{\invo}(\mathfrak{t}(a))\right)\delta_{\mathfrak{t}\circ\invo(a)}(\dd b) +  \left(\lambda_{\invo}(a) -\lambda_{\invo}(\mathfrak{t}(a))\right)^+\delta_{\invo(a)}(\dd b)
    \]
    for some other $\pi$-isometric involution $\mathfrak t$ that \emph{commutes} with $\invo$.
\end{proposition}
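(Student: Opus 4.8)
The plan is to apply \cref{prop:stationary} to the modified kernel $\mu$, so I need to check that $\mu$ is a rate kernel of finite expected rate under $\pi$, that \eqref{eq:semi-local} holds, and that \eqref{eq:half-skew} holds. The first thing I would record is the elementary identity
\[
\min\left(\lambda_{\invo}(a),\lambda_{\invo}(\mathfrak{t}(a))\right) + \left(\lambda_{\invo}(a)-\lambda_{\invo}(\mathfrak{t}(a))\right)^+ = \lambda_{\invo}(a),
\]
obtained by splitting according to the sign of $\lambda_{\invo}(a)-\lambda_{\invo}(\mathfrak{t}(a))$. Thus the two new point masses together carry exactly the same total mass $\lambda_{\invo}(a)$ as the single point mass $\lambda_{\invo}(a)\delta_{\invo(a)}$ in \eqref{eq:balancedjumpkernel}, so the total rate $\lambda$ of the modified $\mu$ coincides with that in \cref{thm:rebalancing}. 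Consequently the finite-expected-rate bound at the start of that proof and the verification of \eqref{eq:semi-local} carry over verbatim; in particular \eqref{eq:semi-local} is to be used only for the total rate $\lambda$, not for the individual point masses (whose own rates are in general not $\invo$-invariant).

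For \eqref{eq:half-skew} I would use that both sides are linear in $\mu$ and verify the identity separately for $\mu_1(a,\dd b)=\lambda_{g}(a,b)\tilde\mu(a,\dd b)$, for $\mu_2(a,\dd b)=\min(\lambda_{\invo}(a),\lambda_{\invo}(\mathfrak{t}(a)))\,\delta_{\mathfrak{t}\circ\invo(a)}(\dd b)$, and for $\mu_3(a,\dd b)=(\lambda_{\invo}(a)-\lambda_{\invo}(\mathfrak{t}(a)))^+\,\delta_{\invo(a)}(\dd b)$. The summand $\mu_1$ is untouched, and the required identity for it is exactly the chain of displays in the proof of \cref{thm:rebalancing} (via \eqref{eq:balancing}, \eqref{eq:skew-db}, \eqref{eq:radon-nikodym}); its integrability hypotheses hold here because the expected rate under $\pi$ is unchanged. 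For $\mu_3$, I would observe that on the right-hand side of \eqref{eq:half-skew} the test value $f(\invo^{-1}(b'))=f(\invo(b'))$ does not depend on the jump target, so that contribution is $\int f(\invo(b'))(\lambda_{\invo}(b')-\lambda_{\invo}(\mathfrak{t}(b')))^+\,\pi(\dd b')$, which after relabelling is literally the left-hand contribution $\int f(\invo(a))(\lambda_{\invo}(a)-\lambda_{\invo}(\mathfrak{t}(a)))^+\,\pi(\dd a)$; so, exactly as for the $\lambda_{\invo}$ term of \cref{thm:rebalancing}, this branch is self-skew and needs nothing further.

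The only genuinely new computation is for $\mu_2$. Its left-hand contribution to \eqref{eq:half-skew} is $\int f(\mathfrak{t}\circ\invo(a))\min(\lambda_{\invo}(a),\lambda_{\invo}(\mathfrak{t}(a)))\,\pi(\dd a)$, and (the test value again being target-independent) its right-hand contribution is $\int f(\invo(b'))\min(\lambda_{\invo}(b'),\lambda_{\invo}(\mathfrak{t}(b')))\,\pi(\dd b')$. The plan is to change variables $a=\mathfrak{t}(b')$: since $\mathfrak{t}$ is $\pi$-isometric, $\int (h\circ\mathfrak{t})\,\dd\pi=\int h\,\dd\pi$ by the same transformation formula used for $\invo$ in the proof of \cref{prop:stationary}; and since $\mathfrak{t}$ is an involution that commutes with $\invo$ we get $\mathfrak{t}\circ\invo\circ\mathfrak{t}=\mathfrak{t}\circ\mathfrak{t}\circ\invo=\invo$ and $\mathfrak{t}(\mathfrak{t}(b'))=b'$. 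Hence after substitution $f(\mathfrak{t}\circ\invo(a))$ becomes $f(\invo(b'))$ and the ordered pair $(\lambda_{\invo}(a),\lambda_{\invo}(\mathfrak{t}(a)))$ becomes $(\lambda_{\invo}(\mathfrak{t}(b')),\lambda_{\invo}(b'))$, whose minimum is symmetric, so the left-hand contribution equals the right-hand contribution. Summing the three pieces gives \eqref{eq:half-skew} for the modified $\mu$, and \cref{prop:stationary} then yields the conclusion of \cref{thm:rebalancing}.

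I expect the substance of the argument to be conceptual rather than computational: the content is recognising that the asymmetric split — sending the overlapping rate $\min(\lambda_{\invo}(a),\lambda_{\invo}(\mathfrak{t}(a)))$ along $a\mapsto\mathfrak{t}\circ\invo(a)$ and the residual $(\lambda_{\invo}(a)-\lambda_{\invo}(\mathfrak{t}(a)))^+$ along $a\mapsto\invo(a)$ — is engineered so that the $\pi$-preserving involution $a\mapsto\mathfrak{t}(a)$ interchanges the two orderings of $\lambda_{\invo}(a)$ and $\lambda_{\invo}(\mathfrak{t}(a))$ while, by commutation, carrying the $\min$-branch target $\mathfrak{t}\circ\invo(a)$ onto $\invo(\mathfrak{t}(a))$; the $\min$-branch is thereby mapped to itself under the skew change of variables, and the residual $(\cdot)^+$-branch is self-skew on its own. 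Everything else reduces to bookkeeping already carried out for \cref{thm:rebalancing}.
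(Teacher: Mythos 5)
Your proposal is correct and matches the paper's argument in substance: the paper likewise notes that $\min\{x,y\}+(x-y)^+=x$ leaves the total rate (hence \eqref{eq:semi-local} and the finite-rate bound) unchanged, and verifies \eqref{eq:half-skew} by applying the $\pi$-isometric substitution $a\mapsto\mathfrak t(a)$ to the $\min$-branch and using $\mathfrak t\circ\invo\circ\mathfrak t=\invo$, exactly as you do. The only (immaterial) difference is bookkeeping: you check the $\min$- and $(\cdot)^+$-branches are each self-skew separately, while the paper recombines them into the single term $\int\lambda_\invo(a)f(\invo(a))\,\pi(\dd a)$ and then invokes the original proof of \cref{thm:rebalancing}.
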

\begin{proof}
By $\max\{x-y,0\} + \min\{x, y\} = x$, the total jump rate is unchanged, so \eqref{eq:semi-local} holds.
Moreover,
\begin{align*}
&  \int \min\left(\lambda_{\invo}(a),\lambda_{\invo}(\mathfrak{t}(a))\right) f(\mathfrak{t}\circ\invo(a)) \pi(\dd a) + 
 \int \left(\lambda_{\invo}(a) -\lambda_{\invo}(\mathfrak{t}(a))\right)^+ f(\invo(a)) \pi(\dd a) 
\\
&=\int \min\left(\lambda_{\invo}(  \mathfrak{t}(a)),\lambda_{\invo}(  \mathfrak{t}\circ\mathfrak{t}(a))\right) f(  \mathfrak{t}\circ\invo\circ\mathfrak{t}(a)) \pi(\dd a) + 
 \int \left(\lambda_{\invo}(a) -\lambda_{\invo}(\mathfrak{t}(a))\right)^+ f(\invo(a)) \pi(\dd a) 
\\
&=\int \min\left(\lambda_{\invo}(  \mathfrak{t}(a)),\lambda_{\invo}(   a)\right) f(\invo(a)) \pi(\dd a) + 
 \int \left(\lambda_{\invo}(a) -\lambda_{\invo}(\mathfrak{t}(a))\right)^+ f(\invo(a)) \pi(\dd a) \\
&=
  \int \lambda_{\invo}(a') f(\invo(a')) \pi(\dd a')
\end{align*}
thus showing \eqref{eq:half-skew}.
Hence the conditions for \cref{prop:stationary} are still satisfied.
\end{proof}

\section{Geometric ergodicity of FFF: Proof of \cref{thm:geometricergodicity}}\label{app:geometricergodicity}
We establish the geometric ergodicity of the proposed sampler for a class of potentials \(U\), using the Meyn--Tweedie approach \cite{meyn_stability_1993-1} as formulated in \cite{roberts_quantitative_1996}; this formulation allows a particularly straightforward proof.
The Meyn--Tweedie approach has also been used for HMC in e.g. \cite{durmus_irreducibility_2020-1,livingstone_geometric_2019}.
Geometric ergodicity shows not only that refreshments are enough for convergence of the ergodic averages, but also, thanks to the exponentially fast convergence, entails a corresponding central limit theorem.
The proof is divided into two parts:
First, we prove a drift condition, i.e., that there is a suitable Lyapunov function $V\colon \mathbb{R}^d \times \mathbb{R}^d \to [1,\infty)$ for which there are constants $\alpha > 0$ and $\beta \in \RR$ such that 
\begin{equation}
\label{eq:drift}
\mathcal A V \le -\alpha V + \beta. 
\end{equation}
Then, we prove a minorization condition, meaning that there exists a probability measure $\nu$ on $\RR^d\times \RR^d$, a time \(t^* > 0\), and a constant $\varsigma \in (0,1)$ such that
\begin{equation}
\label{eq:minorization}
\Pr(Z_{t^*} \in B \xmid Z_0 = x) \geq \varsigma \nu(B),\ x \in G, B \in \mathcal{B}(\mathbb{R}^{2d}),
\end{equation}
where $G = \{x \in \RR^d\times \RR^d: V(x) \leq R\}$ for some $R \ge 2\beta/\alpha$.

We recall the hypotheses on \(U\) in \cref{thm:geometricergodicity}:
\begin{assumption}\label{ass:unlocallyconvex}
	The potential $U \in C^{2}(\mathbb{R}^{d})$ satisfies the tail condition that for some $\kappa,r > 0$,  $U(x) - \frac\kappa2 \|x\|^2$ is convex for all $\lVert x \rVert > r$, and $\nabla U$ is a Lipschitz function with Lipschitz constant $L_{\nabla U}$. 
\end{assumption}

\subsection{Drift condition}
\begin{proposition}\label{prop:drift}
    Let \cref{ass:unlocallyconvex}, \eqref{eq:stepsize} and \eqref{eq:FFFgrowth} hold.
    Then the function
    \begin{equation}\label{eq:V}
        V(q,p) = C + H(q,p)
    \end{equation}
    where \(C \in \mathbb R\) is such that \(\inf V = 1\) satisfies \eqref{eq:drift} for some \(\alpha > 0\) and \(\beta \in \mathbb R\).
\end{proposition}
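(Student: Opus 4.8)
The plan is to compute the generator $\mathcal{A}$ acting on $V = C + H$ and show it is dominated by $-\alpha V + \beta$. Since $\mathcal{A}$ is the generator of the FFF jump process and $H$ is (up to the additive constant) the total energy, the action of $\mathcal{A}$ on $V$ splits according to the two ingredients of the dynamics: the deterministic leapfrog-type flow between jumps, and the jumps themselves (refreshments and the rebalancing $\invo$-moves). For the deterministic part, the key observation is that a single leapfrog step does \emph{not} preserve $H$ exactly, but the energy change over one step is controlled: using \cref{ass:unlocallyconvex} (the $\nabla U$ Lipschitz bound and the strong-convexity-at-infinity tail condition) together with the step-size restriction \eqref{eq:stepsize}, one bounds the per-step energy increment, and hence the contribution of the flow term to $\mathcal{A}V$, by something of the form $c_1 + c_2\|q\|\cdot\|p\| $ or similar, which is itself $o(H)$ outside a compact set. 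For the jump part, a refreshment resamples $p$ from a Gaussian, so $\E[H(q,p')\mid q]$ replaces $\tfrac12\|p\|^2$ by its expectation $\tfrac d2$; this is where the genuinely negative drift $-\alpha V$ comes from, because removing the kinetic energy and the interaction with the (possibly large) potential gives a strict decrease proportional to $H$ when $H$ is large. The $\invo$-jumps are $\pi$-isometric and (being essentially momentum flips or analogous energy-preserving involutions) change $H$ by a bounded amount.

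Concretely, the steps I would carry out, in order: (i) write $\mathcal{A}V(q,p) = \text{(flow term)} + \int (V(b)-V(q,p))\,\mu((q,p),\dd b)$ and identify the jump kernel $\mu$ of FFF from the construction (rebalanced kernel plus refreshment at constant rate $\lambda_{\mathrm{ref}}$); (ii) bound the flow term's contribution to $\mathcal{A}V$ using the leapfrog energy-error estimate, invoking \eqref{eq:stepsize} to keep the map stable and \cref{ass:unlocallyconvex} to get a bound growing strictly slower than $H$ — this gives a term $\le \epsilon H + \text{const}$ for any small $\epsilon$ by taking $\|(q,p)\|$ large; (iii) compute the refreshment contribution: it equals $\lambda_{\mathrm{ref}}\big(\E_{p'\sim\mathcal N}[H(q,p')] - H(q,p)\big) = \lambda_{\mathrm{ref}}\big(\tfrac d2 - \tfrac12\|p\|^2\big)$, and combine with the fact that, by the tail convexity, $U(q)\to\infty$ and $H(q,p)$ is comparable to $U(q)+\tfrac12\|p\|^2$, so a fixed fraction of $H$ gets subtracted on $\{V > R\}$; (iv) bound the $\invo$-jump contribution using that $\invo$ is $\pi$-isometric and changes $H$ boundedly (using \eqref{eq:FFFgrowth} to control the rates $\lambda_\invo$), so it contributes at most $\text{const}\cdot(1 + \text{small}\cdot H)$; (v) assemble: choose $\epsilon$ small enough that flow $+$ $\invo$ contributions are $\le \tfrac{\lambda_{\mathrm{ref}}}{4}H$ outside a compact set, whence $\mathcal{A}V \le -\tfrac{\lambda_{\mathrm{ref}}}{4}H + \beta \le -\alpha V + \beta'$ with $\alpha = \lambda_{\mathrm{ref}}/4$ and $\beta'$ absorbing the compact-set contributions and the constant $C$.

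The main obstacle I expect is step (ii): controlling the flow term. Unlike exact Hamiltonian flow, the leapfrog integrator used in FFF does not conserve $H$, so $\mathcal{A}H$ along the flow is not zero but a drift term whose sign is not obviously favourable, and one must show it grows strictly slower than $H$ itself so that it can be dominated by the refreshment's negative drift. This requires a careful per-step energy-error expansion — typically of the form $|H(\Phi_h(q,p)) - H(q,p)| \le h\,\Psi(q,p)$ with $\Psi$ controlled by $\|\nabla U\|$, its Lipschitz constant, and $\|p\|$ — and then arguing via \eqref{eq:FFFgrowth} and \eqref{eq:stepsize} that $h$ is chosen small enough (possibly state-dependent) that $\Psi(q,p) = o(H(q,p))$ as $\|(q,p)\|\to\infty$. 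The secondary delicate point is verifying that $H$ is genuinely coercive and comparable to $U(q) + \tfrac12\|p\|^2$ under \cref{ass:unlocallyconvex}, i.e., that the tail strong-convexity forces $U(x)\gtrsim \tfrac\kappa2\|x\|^2 - \text{const}$, which is what makes the refreshment drift control a fixed proportion of $V$ rather than just of the kinetic part.
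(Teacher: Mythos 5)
There is a genuine gap, and it lies exactly where you locate your main effort. Your plan makes the refreshment the sole source of the negative drift: in step (iii) you claim that $\lambda_{\rho}\bigl(\tfrac d2 - \tfrac12\|p\|^2\bigr)$ subtracts ``a fixed fraction of $H$'' because $H$ is comparable to $U(q)+\tfrac12\|p\|^2$. That is false on the region where $\|p\|$ is small but $U(q)$ is large: there the refreshment term is approximately $+\lambda_\rho d/2$, i.e.\ it contributes nothing negative at all, while $V$ is huge. Consequently your assembly step (v) cannot close: even if you succeed in bounding the leapfrog-jump contribution by $\epsilon H + \mathrm{const}$ (your step (ii)), you are left with $\mathcal A V \le \epsilon H - \tfrac{\lambda_\rho}{2}\|p\|^2 + \mathrm{const}$, which is not $\le -\alpha V + \beta$ on $\{\|p\|^2 \ll U(q)\}$. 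Treating the frog/leapfrog term as a nuisance to be dominated is therefore the wrong orientation; in that regime it must itself supply the drift.

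The paper's proof is organised around precisely this point. The generator is purely of jump type (frog, flip, refresh; there is no deterministic flow between jumps, and the flip contributes zero since $H(q,-p)=H(q,p)$), and the analysis splits on $A_1=\{\|p\|\le K_1\sqrt{U(q)+C}\}$ versus its complement. On $A_1$, a second-order expansion of the leapfrog energy difference (\cref{lemma:taylor}) together with the tail strong convexity (\cref{lemma:remainder1}) and the stepsize restriction \eqref{eq:stepsize} shows $H(q_1,p_1)-H(q,p)\le -\tfrac c4\|\delta\|^2 + \mathrm{const}$ with $\|\delta\|^2\gtrsim U(q)+C$; since the energy change is uniformly bounded above on $A_1$ and $g(t)\ge\min\{1,t\}$, the frog rate is bounded below, so the frog jump produces negative drift proportional to $U(q)+C$, which combined with the refreshment's $-\tfrac{\lambda_\rho}{2}\|p\|^2$ dominates $V$. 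On $A_2$ the roles reverse: the frog contribution $g(e^{-x})x$ is bounded by a constant via the balancing identity \eqref{eq:balancing} and the growth condition \eqref{eq:FFFgrowth}, and the refreshment term dominates because $\|p\|^2\ge K_1^2(U(q)+C)$ there. Your outline contains neither the regime split nor the mechanism by which the leapfrog jump dissipates potential energy, and your anticipated ``main obstacle'' (showing the leapfrog energy error is $o(H)$) is not even the right target, since a one-sided $o(H)$ bound on that term would still not yield \eqref{eq:drift}.
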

Our proof of the drift condition requires two lemmas.
We first give a sufficiently fine-grained Taylor expansion of the terms for how the Hamiltonian changes for a forward step. 
\begin{lemma}\label{lemma:taylor}
    Let $U$ be twice continuously differentiable and
    $(q_1, p_1) = \lf(q, p)$. Then with ${\delta} = q_1 - q =   \epsilon p - \frac{\epsilon^{2}}{2} \nabla U(q)$,
 \begin{align}\label{eq:hamdiff1}
 H(q_1, p_1) - H(q, p)  &=
  \frac\epsilon2 \langle p , R_2\rangle  + \frac{\epsilon^{2}}{8}\|R_2\|^{2} 
   + \langle {\delta}  , R_1 - R_2\rangle,
   \end{align}
where
$
R_1 =  \int_0^1  \nabla^{2}U(q + \alpha  {\delta}) {\delta} (1-\alpha)  \dd \alpha$ and $ 
R_2 =  \nabla U(q_1) - \nabla U(q)
$.
\end{lemma}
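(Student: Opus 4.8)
The plan is a direct computation: expand $H(q_1,p_1)-H(q,p)$ into its potential and kinetic contributions, use the second-order Taylor expansion of $U$ for the former and the explicit leapfrog update for the latter, and then collect terms.

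First I would record the leapfrog map explicitly. The half-step--full-step--half-step form of $\lf$ gives $q_1 = q + \epsilon p - \tfrac{\epsilon^2}{2}\nabla U(q)$, so $\delta = q_1 - q = \epsilon p - \tfrac{\epsilon^2}{2}\nabla U(q)$ as stated, and $p_1 = p - \tfrac{\epsilon}{2}\bigl(\nabla U(q) + \nabla U(q_1)\bigr)$. Writing $\nabla U(q_1) = \nabla U(q) + R_2$ turns the momentum update into $p_1 = p - \epsilon\nabla U(q) - \tfrac{\epsilon}{2}R_2$. Then split $H(q_1,p_1)-H(q,p) = \bigl(U(q_1)-U(q)\bigr) + \tfrac12\bigl(\|p_1\|^2-\|p\|^2\bigr)$. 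For the potential part, Taylor's theorem with integral remainder to second order gives $U(q+\delta) = U(q) + \langle \nabla U(q),\delta\rangle + \langle R_1,\delta\rangle$ with $R_1$ exactly the quantity in the statement, and substituting $\delta = \epsilon p - \tfrac{\epsilon^2}{2}\nabla U(q)$ into $\langle\nabla U(q),\delta\rangle$ yields $\epsilon\langle\nabla U(q),p\rangle - \tfrac{\epsilon^2}{2}\|\nabla U(q)\|^2$. For the kinetic part, insert $p_1 = p - \epsilon\nabla U(q) - \tfrac{\epsilon}{2}R_2$ and expand $\|p_1\|^2-\|p\|^2 = -2\langle p, \epsilon\nabla U(q) + \tfrac{\epsilon}{2}R_2\rangle + \|\epsilon\nabla U(q) + \tfrac{\epsilon}{2}R_2\|^2$, producing $-\epsilon\langle p,\nabla U(q)\rangle - \tfrac{\epsilon}{2}\langle p,R_2\rangle + \tfrac{\epsilon^2}{2}\|\nabla U(q)\|^2 + \tfrac{\epsilon^2}{2}\langle\nabla U(q),R_2\rangle + \tfrac{\epsilon^2}{8}\|R_2\|^2$.

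Adding the two parts, the terms $\epsilon\langle\nabla U(q),p\rangle$ and $\tfrac{\epsilon^2}{2}\|\nabla U(q)\|^2$ cancel against their counterparts, leaving $\langle\delta,R_1\rangle - \tfrac{\epsilon}{2}\langle p,R_2\rangle + \tfrac{\epsilon^2}{2}\langle\nabla U(q),R_2\rangle + \tfrac{\epsilon^2}{8}\|R_2\|^2$. The last manipulation is to note the identity $\langle\delta,R_2\rangle = \epsilon\langle p,R_2\rangle - \tfrac{\epsilon^2}{2}\langle\nabla U(q),R_2\rangle$, equivalently $\tfrac{\epsilon^2}{2}\langle\nabla U(q),R_2\rangle = \epsilon\langle p,R_2\rangle - \langle\delta,R_2\rangle$; substituting this converts $-\tfrac{\epsilon}{2}\langle p,R_2\rangle + \tfrac{\epsilon^2}{2}\langle\nabla U(q),R_2\rangle$ into $\tfrac{\epsilon}{2}\langle p,R_2\rangle - \langle\delta,R_2\rangle$, and combining with $\langle\delta,R_1\rangle$ gives $\tfrac{\epsilon}{2}\langle p,R_2\rangle + \tfrac{\epsilon^2}{8}\|R_2\|^2 + \langle\delta,R_1-R_2\rangle$, which is \eqref{eq:hamdiff1}.

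There is no genuine obstacle here: the whole lemma is an identity, valid for any $\epsilon$, and the only thing requiring care is the bookkeeping of $\epsilon$-powers and of which argument each gradient is evaluated at. Accordingly I would present the potential expansion, the kinetic expansion, and the two substitutions of $\delta$ as separate clean steps rather than expanding everything simultaneously, so that the cancellations are transparent.
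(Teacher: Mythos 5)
Your proof is correct and follows essentially the same route as the paper: write out the leapfrog update with $p_1 = p - \epsilon\nabla U(q) - \tfrac{\epsilon}{2}R_2$, apply second-order Taylor with integral remainder to the potential, expand the kinetic term, and regroup the cross terms via $\delta = \epsilon p - \tfrac{\epsilon^2}{2}\nabla U(q)$. The only cosmetic difference is that the paper folds the cancellation into "identifying terms corresponding to $\delta$" in the kinetic expansion, whereas you make the substitution $\tfrac{\epsilon^2}{2}\langle\nabla U(q),R_2\rangle = \epsilon\langle p,R_2\rangle - \langle\delta,R_2\rangle$ explicit; the algebra is identical.
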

\begin{proof}
We have
\begin{align*}%
    q_1 &= q + \ep p - \frac{\ep^2}{2}\nabla U(q)  = q + \delta,\\
    p_1 &= p - \frac{\ep}{2}\big(\nabla U(q)  + \nabla U(q_1)\big) = 
    p - \epsilon \nabla U(q) - \frac{\epsilon}{2} R_2.
\end{align*}

Using Taylor's formula, 
we get
\begin{align}
    \label{eq:taylor1}
    U(q_{1}) - U(q)   &
   =   \langle{\delta} , \nabla U(q) + R_1 \rangle .
\end{align}

Expanding $\|p_1\|^{2}/2$ leads to
\begin{align*}
    \frac{\|p_1\|^{2}}2 &= \frac12 \Big\|p - \frac{\epsilon}{2}(\nabla U(q)  + \nabla U(q_{1}))\Big\|^{2}  
    = \frac12 \Big\|p - \epsilon\nabla U(q)  - \frac\epsilon2 R_2\Big\|^{2} 
    \\ & 
    = \frac{\|p\|^{2}}2 + \frac{\epsilon^{2}}{2}\|\nabla U(q)\|^2 + \frac{\epsilon^{2}}{8}\|R_2\|^{2} -  \epsilon \langle p, \nabla U(q)\rangle - \frac\epsilon2 \langle p,  R_2\rangle + \frac{\epsilon^2}2 \langle  \nabla U(q) , R_2\rangle.
\end{align*}

By identifying terms corresponding to $\delta$ we rewrite to
\begin{align}
\label{eq:onehand}
\frac{\|p_1\|^{2}}2 - \frac{\|p\|^{2}}2
&= -   \langle {\delta}, \nabla U(q) + R_2\rangle  + \frac \epsilon2 \langle p , R_2\rangle  + \frac{\epsilon^{2}}{8}\|R_2\|^{2}.   %
\end{align}
Since
\begin{align*}
    H(q_1,p_1)-H(q,p) = \frac{\|p_1\|^{2}}2-\frac{\|p\|^{2}}2 + U(q_1)-U(q),
\end{align*}
we have that \eqref{eq:hamdiff1} follows by \eqref{eq:onehand} and \eqref{eq:taylor1}.
\end{proof}

Next, we require control of the remainder in our expansion. 
\begin{lemma}\label{lemma:remainder1}
Let \cref{ass:unlocallyconvex} hold. Then, in the setting of the preceding \cref{lemma:taylor}, there exists a $C_\kappa \ge 0$ such that
\[
\langle {\delta}, R_1 - R_2 \rangle \le -\frac12 \kappa \|{\delta}\|^2 + C_\kappa\|{\delta}\|,
\]
where  $ \delta = \epsilon p - \frac{\epsilon^{2}}{2} \nabla U(q)$. 
\end{lemma}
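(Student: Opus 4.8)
The plan is to reduce everything to a pointwise lower bound on the Hessian $\nabla^2 U$ along the segment $[q,q_1]$, together with an elementary estimate of how long that segment can stay inside the ball where convexity fails.

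First I would rewrite the remainder $R_1 - R_2$ as a single integral. Since $\nabla U \in C^1$, Taylor's theorem gives $R_2 = \nabla U(q+\delta)-\nabla U(q) = \int_0^1 \nabla^2 U(q+\alpha\delta)\,\delta\,\dd\alpha$, so
\[
\langle \delta,\,R_1-R_2\rangle \;=\; -\int_0^1 \alpha\,\bigl\langle \delta,\,\nabla^2 U(q+\alpha\delta)\,\delta\bigr\rangle\,\dd\alpha .
\]
Thus it suffices to control $\langle \delta, \nabla^2 U(x)\delta\rangle$ for $x$ on this segment. From \cref{ass:unlocallyconvex} I extract two pointwise facts: convexity of $U-\tfrac\kappa2\|\cdot\|^2$ on convex subsets of $\{\|x\|>r\}$ forces $\nabla^2 U(x)\succeq \kappa I$ whenever $\|x\|>r$; and $\nabla U$ being $L_{\nabla U}$-Lipschitz makes the operator norm of $\nabla^2 U(x)$ at most $L_{\nabla U}$ everywhere, so $\langle v,\nabla^2 U(x)v\rangle\ge -L_{\nabla U}\|v\|^2$ always. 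Combining the two cases,
\[
\langle v,\,\nabla^2 U(x)\,v\rangle \;\ge\; \kappa\|v\|^2 \;-\; (\kappa+L_{\nabla U})\,\|v\|^2\,\ind_{\{\|x\|\le r\}} \qquad \text{for all } x,v .
\]

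Substituting this into the integral and using $\int_0^1\alpha\,\kappa\,\dd\alpha = \tfrac\kappa2$ yields
\[
\langle \delta,\,R_1-R_2\rangle \;\le\; -\tfrac12\kappa\|\delta\|^2 \;+\; (\kappa+L_{\nabla U})\,\|\delta\|^2\int_0^1 \ind_{\{\|q+\alpha\delta\|\le r\}}\,\dd\alpha ,
\]
so the whole statement reduces to the bound $\|\delta\|^2\int_0^1 \ind_{\{\|q+\alpha\delta\|\le r\}}\,\dd\alpha \le 2r\|\delta\|$. This is the only genuinely geometric step, and it is short: if $\alpha_1,\alpha_2\in[0,1]$ both satisfy $\|q+\alpha_i\delta\|\le r$ then $|\alpha_1-\alpha_2|\,\|\delta\| = \|(q+\alpha_1\delta)-(q+\alpha_2\delta)\|\le 2r$, so the set $\{\alpha\in[0,1]:\|q+\alpha\delta\|\le r\}$ lies in an interval of length at most $2r/\|\delta\|$ and of course has length at most $1$; hence the integral is $\le\min(1,2r/\|\delta\|)$, and multiplying by $\|\delta\|^2$ gives at most $2r\|\delta\|$ whether $\|\delta\|\le 2r$ or $\|\delta\|>2r$ (the case $\delta=0$ being trivial). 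Taking $C_\kappa = 2r(\kappa+L_{\nabla U})$ finishes the proof. The only point needing care is the reading of the convexity hypothesis on the non-convex region $\{\|x\|>r\}$: one should interpret it as $U-\tfrac\kappa2\|\cdot\|^2$ being convex on every convex open subset of that region, which for a $C^2$ function is equivalent to the pointwise inequality $\nabla^2 U(x)\succeq\kappa I$ there; everything else is routine given \cref{lemma:taylor}.
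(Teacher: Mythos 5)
Your proof is correct and takes essentially the same route as the paper: both rewrite $R_1-R_2$ as $-\int_0^1 \alpha\,\nabla^2U(q+\alpha\delta)\,\delta\,\dd\alpha$, split off the $-\tfrac{\kappa}{2}\|\delta\|^2$ part using convexity of $U-\tfrac{\kappa}{2}\|\cdot\|^2$ outside the ball of radius $r$, and control the remaining contribution by the observation that the segment $\{q+\alpha\delta\}$ meets that ball in a piece of length at most $2r$. The only (immaterial) difference is the constant: you bound the Hessian on the bad region by the Lipschitz constant, giving $C_\kappa=2r(\kappa+L_{\nabla U})$, whereas the paper uses the smallest eigenvalue of $\nabla^2\bigl(U-\tfrac{\kappa}{2}\|\cdot\|^2\bigr)$ inside the ball, yielding $C_\kappa=2r\max(0,-\ell)$.
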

\begin{proof}
By assumption, the function $h = U(x) - \kappa \|x\|^2/2$ is convex outside a ball with radius $r \ge 0$. 
Let
$\ell \in \RR$ denote the smallest (possibly negative) eigenvalue of $ \nabla^2 h$ inside this ball.

With $R_2 = \int_0^1  \nabla^2 U(q + \alpha {\delta}){\delta} \dd \alpha$, we expand
\begin{align*}
\langle {\delta}, R_1 - R_2 \rangle 
&= -\int_0^1 \kappa {\delta}^\tran  {\delta} \alpha \dd \alpha  -\int_0^1 {\delta}^\tran \nabla^2 h(q + \alpha {\delta}){\delta} \alpha \dd \alpha \\
&\le -\frac12 \kappa\|{\delta}\|^2 
+\int_0^{\|{\delta}\|} \left(-{\delta}^\tran \nabla^2 h\Big(q + u\frac{ {\delta}}{\|{\delta}\|}\Big){\delta}\right)^+ \frac{u}{\|{\delta}\|^2} \dd u,
\end{align*}
where we have  substituted $u = \alpha\|{\delta}\|$, so that instead of integrating along a line of length $1$, we integrate along a line of length $\|{\delta}\|$.
In this case, only the part of that line overlapping with the ball of radius $r$ contributes, and we bound using $u \le \|{\delta}\|$ to obtain
\[
\langle {\delta}, R_1 - R_2 \rangle \le -\frac12 \kappa\|{\delta}\|^2 +
2 r\max(0,-\ell)\|{\delta}\|
\]
and we are done with \(C_\kappa = 2 r \max(0, -\ell)\).
\end{proof}
We now proceed with the proof of the drift condition.
\begin{proof}[of \cref{prop:drift}]
Two direct implications of \cref{ass:unlocallyconvex} are that 
\begin{equation}\label{eq:cdef}
\inf_{q \in \RR^d} \{U(q) + C\} = 0
\end{equation}
for some $C \in \RR$, and that there exists a constant $\tilde{C} = \sup_{\lVert q \rVert \le r} \sqrt{2\kappa (U(q) + C)} \geq 0$ such that we have the bound
\begin{equation}\label{eq:gradbound}
	\|\nabla U(q) \| \ge \sqrt{2\kappa (U(q) + C)} - \tilde{C}.
\end{equation}
The latter follows as one may apply a standard convexity inequality \cite[4.12]{bottou_optimization_2018} but make the statement trivial inside the ball where convexity does not hold.

We decompose the generator $\mathcal{A}$ into the three terms $\mathcal{A}_{\frog}$, $\mathcal{A}_{\flip}$, and $\mathcal{A}_{\fresh}$ corresponding to the three event types. We have %
\begin{align*}
\mathcal{A}_\fresh V(q, p) 
&= \lambda_\rho \left(d/2 -  \|p\|^2/2  \right),
\\
\cA_\flip V(q, p) &= 0,
\\
\cA_\frog V(q, p) &= g\left(e^{-(H(q_1, p_1) - H(q, p))}\right)\left[H(q_1, p_1) - H(q, p)\right],
\end{align*}
where $(q_1,p_1) = \lf(q,p)$.
Let
\begin{equation*}
{\delta} =  q_1 - q = \epsilon p - \frac{\epsilon^{2}}{2} \nabla U(q).
\end{equation*}
We divide our analysis into two cases depending on whether \(\lVert p \rVert\) is ``small'' or ``large'', by considering the set
\begin{equation*}
    A_1 = \{q, p\colon \|p\| \le K_1   \sqrt{|U(q) + C|}\}
\end{equation*}
where $C$ is given by \eqref{eq:cdef} and \(K_1 > 0\) is chosen later in the proof.
In the first case, $(q,p) \in A_1$; in the second case, $(q,p) \in A_2 = A_1^\complement$.

\emph{Case \(A_1\).}  %
Using \cref{lemma:taylor}, we have
\[ H(q_1, p_1) - H(q, p)  =
  \frac\epsilon2 \langle p , R_2\rangle  + \frac{\epsilon^{2}}{8}\|R_2\|^{2} 
    + \langle \delta  , R_1 - R_2\rangle,
\]
where
$
R_1 =  \int_0^1  \nabla^{2}U(q + \alpha  {\delta}) {\delta} (1-\alpha)  \dd \alpha,\; 
R_2 =  \nabla U(q_1) - \nabla U(q)
$.
It now follows by elementary inequalities that
\begin{align*}
    \langle p, R_2 \rangle &\le \lVert p \rVert \lVert R_2 \rVert, &
    \| R_2\| & \leq  L_{\nabla U}\|q_1 - q \|  =  L_{\nabla U}\|\delta\|,
\end{align*}
and furthermore by \cref{lemma:remainder1} that
\begin{align*}
    \langle \delta, R_1 - R_2 \rangle &\leq  -\frac12 \kappa \|{\delta}\|^2 + C_\kappa\|{\delta}\|.
\end{align*}
Denoting $-c := \frac{\epsilon^{2}}{8}L_{\nabla U}^2 - \frac12 \kappa$, we have the following bound:
 \begin{align}
 H(q_1, p_1) - H(q, p) &
 \leq   \frac\epsilon2   \| p \|  \| R_2 \|  + \frac{\epsilon^{2}}{8}\|R_2\|^{2} 
    + \langle \delta, R_1 - R_2 \rangle \nonumber\\
  & \le   \frac\epsilon2   L_{\nabla U}\|p\| \| \delta \|  + \frac{\epsilon^{2}}{8} L_{\nabla U}^2  \| \delta \|^2  -\frac12 \kappa \|{\delta}\|^2 + C_\kappa\|{\delta}\| \nonumber\\
 &\le
\frac\epsilon2   L_{\nabla U}\|p\|  \,\|\delta\|+ \left(\frac{\epsilon^{2}}{8}L_{\nabla U}^2 - \frac12 \kappa\right)\|\delta \|^2   + C_\kappa\|\delta\| \nonumber\\
  &\le
\underbrace{\frac\epsilon2   L_{\nabla U}\|p\|  \,\|\delta\|- \frac{c}2\|\delta \|^2 }_{\text{I}}   \underbrace{-\,\frac{c}2\|\delta \|^2  + C_\kappa\|\delta\|}_{\text{II}}, \label{fff_eqn_5.5}
\end{align}
where $c > 0$ due to \eqref{eq:stepsize}.
The bound in \eqref{eq:gradbound} leads to 
\begin{align}
\|\delta\| \ge   \frac{\ep^2}{2}\|\nabla U(q)\| - \ep\|p\|  
\ge \frac{\ep^2\sqrt{2\kappa}}{2}\sqrt{ U(q) + C } - \ep\|p\| - \frac{\ep^2}{2}\tilde{C} .  \label{fff_eqn_5.6}
\end{align}
 We first discuss the term (I) in \eqref{fff_eqn_5.5}.
 Since $(q,p) \in A_1$, therefore
 \begin{align*}
 \| \delta \| + \frac{\ep^2}{2}\tilde{C} \geq \bigg( \frac{\ep^2 \sqrt{2 \kappa}}{2 K_1} - \ep\bigg)\| p\|. 
 \end{align*}
Take $K_1$ to satisfy the following:
\begin{align}
     \frac{\ep \sqrt{2 \kappa}}{2 K_1} =  \frac{ L_{\nabla U}}{c } + 1 \iff
    K_1 = \frac{c}{2(L_{\nabla U}+c)}{\epsilon \sqrt{2 \kappa}}. \label{fff_k_1}
\end{align}
Therefore, we get
\begin{align}\label{fff_eqn_A10}
     \frac{\ep L_{\nabla U}}{2}\| p\| \leq  \frac{c}{2}\bigg(\| \delta \| + \frac{\ep^2}{2}\tilde{C}\bigg).
\end{align}
To deal with term (II) in \eqref{fff_eqn_5.5}, we use Young's inequality to arrive at
\begin{align}
    C_{k}\|\delta\| \leq \frac{c}{4}\|\delta\|^2 + \frac{1}{ c }C^{2}_{\kappa}.  \label{fff_eqn_A11}
\end{align}
Using \eqref{fff_eqn_A10} and \eqref{fff_eqn_A11} in \eqref{fff_eqn_5.5}, we obtain
\begin{align}
    H(q_1, p_1) - H(q,p) \leq -\frac{c}{4}\| \delta\|^2 +  \frac{c \ep^2}{4}\tilde{C} + \frac{1}{ c}C^{2}_{\kappa}. \label{fff_eqn_S17}
\end{align}
Consequently, we obtain
\begin{align}
\cA V(q, p) &\le g\big(\exp(- H(q_1, p_1) - H(q,p))\big) \bigg(  -\frac{c}{4}\| \delta\|^2 +  \frac{c \ep^2}{4}\tilde{C} + \frac{1}{ c}C^{2}_{\kappa} \bigg)\nonumber  \\   &  \quad -\frac{\lambda_\rho}{2}    \|p\|^2 + \frac{d}{2}. \label{fff_eqn_S18}
\end{align}
From \eqref{fff_eqn_5.6}, we have
\begin{align*}
    \|\delta\|   
\ge \bigg(\frac{\ep^2\sqrt{2\kappa}}{2} - \ep K_1 \bigg)\sqrt{ U(q) + C } - \frac{\ep^2}{2}\tilde{C}. 
\end{align*}
The choice of $K_1$ in \eqref{fff_k_1} yields
\begin{align*}
    \|\delta\|   
\ge \frac{\ep^2\sqrt{2\kappa}}{2} \frac{L_{\nabla U}}{L_{\nabla U} +c}\sqrt{ U(q) + C } - \frac{\ep^2}{2}\tilde{C}. 
\end{align*}
This implies 
\begin{align*}
    \bigg(\|\delta\|  +  \frac{\ep^2}{2}\tilde{C}\bigg)^2 \geq K_2 (U(q) + C)
\end{align*}
with $K_2 := \Big(\frac{\ep^2\sqrt{2\kappa}}{2} \frac{L_{\nabla U}}{L_{\nabla U} +c}\Big)^2 $.  Using an elementary inequality, we have
\begin{align*}
    2 \bigg(\|\delta\|^2  +  \frac{\ep^4}{4}\tilde{C}^2\bigg) \geq \bigg(\|\delta\|  +  \frac{\ep^2}{2}\tilde{C}\bigg)^2 \geq K_2 (U(q) + C ).
\end{align*}
Therefore, 
\begin{align}
    \|\delta\|^2 \geq \frac{K_2}{2} U(q)  +\frac{1}{2} K_2 C -\frac{\ep^4}{4}\tilde{C}^2 . \label{fff_A_16}
\end{align}
Further, \eqref{fff_eqn_S17} implies  there is an upper bound $\bar{H} = \frac{c \ep^2}{4}\tilde{C} + \frac{1}{ c}C^{2}_{\kappa}$ uniform on $A_1$, i.e.,  
\begin{align}
    H(q_1, p_1) - H(q,p) \leq \bar{H}, \label{fff_eqn_S20}
\end{align}
for all $(q, p) \in A_1$. Using properties of the balancing function $g$, we have 
\begin{equation}\label{fff_eqn_S21}
g(t) \geq \min\{1 , t\}.  
\end{equation}
Thus on $A_1$, using \eqref{fff_A_16}-\eqref{fff_eqn_S21} in \eqref{fff_eqn_S18}, we get
\[
\mathcal A V \le -\alpha_1 V + \beta_1, 
\] 
for some $\alpha_1 > 0, \beta_1 \in \RR$.

\emph{Case \(A_2\).} We now consider $A_2 = A_1^\complement$, where $\|p\|$ is not small relative to $\sqrt{ U(q) + C}$. 

By \eqref{eq:balancing} followed by \eqref{eq:FFFgrowth}, for \(e^x > 2\)
\[
    x g(e^{-x}) = x e^{-x} g(e^x) \le c
\]
and combining with \(0 \le g(e^{-x}) \le g(1) = 1\) for \(e^x < 2\) and \( x >0\) we deduce
\[
\cA V(q,p) \le \max\{c, \log 2\} + \lambda_\rho \frac{d}{2} - \lambda_\rho \|p\|^2/2.
\]
On $A_2$,
\[
 \|p\|^2 \ge K^2_1 (U(q) + C)
 \]
 so 
\[
\cA V(q, p) \le \left(\max\{c, \log 2\} + \lambda_\rho \frac{d}{2} - \lambda_\rho \frac{C}{2 + K_1^2}\right) - \frac{K^2_1}{2 + K^2_1} \lambda_\rho H(q, p)
\]
and we get
\[
\mathcal A V \le -\alpha_2 V + \beta_2. 
\] 
for some $\alpha_2 > 0, \beta_2 \in \RR$ on $A_2$.
Combining both bounds in an appropriate manner proves the proposition.
\end{proof}

\subsection{Minorization condition}
\begin{proposition}\label{prop:minorization}
    Under \cref{ass:unlocallyconvex}, there exist \(\varsigma\), \(t^*\), and \(\nu\) such that \eqref{eq:minorization} holds.
\end{proposition}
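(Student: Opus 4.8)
To prove \cref{prop:minorization} the plan is the classical ``reach a small set, then produce a density'' argument. Fix $t^* = 1$ and $\delta := t^*/6$. For an arbitrary starting point $x=(q_x,p_x) \in G$, consider the \emph{refresh--leapfrog--refresh} scenario $E$ on $[0,t^*]$: (i) the first event after time $0$ is a FRESH event, occurring at a time $\tau_1 \in (0,\delta)$; (ii) the next event is a FROG event, at a time $\tau_2 \in (\tau_1,\tau_1+\delta)$; (iii) the next event is again a FRESH event, at a time $\tau_3 \in (\tau_2,\tau_2+\delta)$; (iv) no event occurs in $(\tau_3,t^*]$. Along $E$ the position is held at $q_x$ until the FROG step and is unchanged by both FRESH events, so $Z_{t^*} = (q_1, P_2)$ with $q_1 = q_x - \tfrac{\epsilon^2}{2}\nabla U(q_x) + \epsilon P_1$, where $P_1$ is the momentum sampled at the first FRESH event and $P_2$ the one sampled at the second, drawn independently of everything that happened before $\tau_3$. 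The structural fact that drives the argument is that $q_1$ is an affine image of the refreshed Gaussian $P_1$ (Jacobian $\epsilon^{d}$) while $P_2$ is an independent refreshed Gaussian, so that $\Pr(Z_{t^*} \in \cdot,\, E \mid Z_0 = x)$ is absolutely continuous on $\mathbb R^{d} \times \mathbb R^{d}$, with the position coordinate carrying the density $\phi_{\epsilon^2 I_d}(q - q_x + \tfrac{\epsilon^2}{2}\nabla U(q_x))$ and the momentum coordinate the density $\phi_{I_d}(p)$.

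Before estimating $\Pr(E)$ I would record what makes everything uniform over $x \in G$. By \cref{ass:unlocallyconvex} one has $U(q) \ge \tfrac{\kappa}{2}\|q\|^2 - c_0$ for a constant $c_0$, so $G = \{H \le R - C\}$ is compact; write $Q_G := \mathrm{proj}_q(G)$. The total jump rate $\Lambda := \lambda_\rho + \lambda_g + \lambda_{\invo}$ is continuous (from continuity of $g$, of $H \circ \lf - H$, and of $\lambda_{\invo}$ through its defining formula), hence bounded on compacta, while $\Lambda \ge \lambda_\rho > 0$ and $\lambda_g > 0$ everywhere, so $\lambda_g$ is bounded below on compacta. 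Fix balls $B_q, B_p \subset \mathbb R^d$ and then a ball $K_p \subset \mathbb R^d$ so large that $q' - \tfrac{\epsilon^2}{2}\nabla U(q') + \epsilon p \in B_q$ already forces $p \in K_p$ for every $q' \in Q_G$ (possible since this set of $p$'s lies in a fixed compact set), and large enough to contain the momentum-projection of $G$. Let $\mathcal K$ be a single compact set containing $G$, $Q_G \times K_p$, $\lf(Q_G \times K_p)$ and $\mathrm{proj}_q(\lf(Q_G\times K_p)) \times B_p$; put $\bar\Lambda := \sup_{\mathcal K}\Lambda < \infty$ and $\underline\lambda := \inf_{Q_G\times K_p}\lambda_g > 0$.

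Then I would bound $\Pr(Z_{t^*}\in B,\, E \mid Z_0 = x)$ from below by peeling off the four events using the strong Markov property at $\tau_1,\tau_2,\tau_3$ (legitimate since $Z$ is non-explosive). During each holding period the competing event clocks are independent exponentials, with the jump time independent of the mark, so: holding at $x$, the first event is FRESH within $\delta$ with probability $\tfrac{\lambda_\rho}{\Lambda(x)}(1-e^{-\Lambda(x)\delta}) \ge \tfrac{\lambda_\rho}{\bar\Lambda}(1-e^{-\lambda_\rho\delta}) =: c_1$; holding at $(q_x,P_1)$ with $P_1 \in K_p$, the next event is FROG within $\delta$ with probability $\ge \tfrac{\underline\lambda}{\bar\Lambda}(1-e^{-\lambda_\rho\delta}) =: c_2$, which moves the state into $\lf(Q_G\times K_p)$; from there the next event is FRESH within $\delta$ with probability $\ge \tfrac{\lambda_\rho}{\bar\Lambda}(1-e^{-\lambda_\rho\delta}) =: c_3$; and from $(q_1,P_2)$ with $P_2 \in B_p$ no event occurs in $(\tau_3,t^*]$ with probability $\ge e^{-\bar\Lambda t^*} =: c_4$. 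Since $\{q_1 \in B_q\}$ already entails $P_1 \in K_p$, hence $(q_x,P_1),\ \lf(q_x,P_1) \in \mathcal K$, and since the Gaussian densities $\phi_{\epsilon^2 I_d}(\cdot - q' + \tfrac{\epsilon^2}{2}\nabla U(q'))$ and $\phi_{I_d}$ are bounded below by $c_q > 0$ on $B_q$ (uniformly in $q' \in Q_G$) and by $c_p > 0$ on $B_p$, conditioning successively on $\mathcal F_{\tau_3},\mathcal F_{\tau_2},\mathcal F_{\tau_1}$ and using that $P_2$ is sampled afresh yields
\[
\Pr(Z_{t^*}\in B \mid Z_0 = x) \;\ge\; c_1 c_2 c_3 c_4\, c_q c_p\, \mathrm{Leb}(B_q\times B_p)\; \nu(B),\qquad x \in G,\ B \in \mathcal B(\mathbb R^{2d}),
\]
where $\nu$ is normalized Lebesgue measure on $B_q \times B_p$. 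Taking $\varsigma := \min\{1,\, c_1 c_2 c_3 c_4 c_q c_p\, \mathrm{Leb}(B_q\times B_p)\}$ establishes \eqref{eq:minorization}.

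The one genuinely delicate point is obtaining a density with respect to $2d$-dimensional Lebesgue measure, which is why the scenario must include a FRESH event \emph{after} the FROG step and not merely a leapfrog step: a single leapfrog step randomized only through a Gaussian momentum carries the state onto a $d$-dimensional submanifold of $\mathbb R^{2d}$, so it is the first refreshment that makes the position coordinate of $Z_{t^*}$ absolutely continuous and the second that fills in the momentum directions. Everything else is routine bookkeeping which, thanks to continuity of the rates, reduces to uniform upper and lower bounds on the single compact set $\mathcal K$.
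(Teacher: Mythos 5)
Your proposal is correct and follows essentially the same route as the paper: a refresh--leapfrog--refresh scenario in which the first refreshed Gaussian momentum, pushed through the affine leapfrog position update, gives a lower-bounded position density while the second refreshment supplies the momentum density, with all rate bounds obtained uniformly by compactness of the sublevel set \(G\). The only differences are cosmetic: you take \(t^*=1\) with flexible jump times and minorize by normalized Lebesgue measure on a product of balls, whereas the paper fixes \(t^*=3\) with rigid subintervals and uses a truncated Gaussian measure on \(G\).
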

\begin{proof}
We introduce the following kernel corresponding to \(t^* = 3\):
\begin{align}
    \mathbb{K}( a , B) = \mathbb{P}( Z_{t + 3} \in B\;|\; Z_{t}= a).
\end{align}
Recall that $G = \{(q,p) : V(q,p) \leq R\}$ for some $R \ge 2\beta/\alpha$.
Let $\tilde{G} = \{q : \exists p, (q,p) \in G \} $.
Since $G$ is a sub-level set of a quadratically growing function, $G$ is a compact set.
Furthermore, \(\tilde{G}\) is measurable as \(\tilde{G} = \{q : C + U(q) \le R\}\).

One step of the leapfrog integrator is a symplectic transformation, which implies   that for any $q, q_1 \in \Tilde{G}$ we can find a $u \in \mathbb{R}^d$ such that
$(q_1, u_1) = \lf(q, u)$ 
for some $u_1 \in \mathbb{R}^d$. We let $J(q,q_1) = u$. 

Thus, in principle, a refreshment followed by a leapfrog jump followed by a refreshment can go anywhere, and we require a lower bound on this possibility.
As both \(J\) and \(\lf\) are diffeomorphisms, they map compact sets to compact sets, so by compactness there is a uniform upper bound 
\begin{equation*}
    \max_{(q,p) \in G \cup \tilde{G} \times J(\tilde{G}^2) \cup \lf(\tilde{G} \times J(\tilde{G}^2))}\{\lambda_g(q,p),\lambda_\invo(q,p)\} \le c_\lambda.
\end{equation*}
    We divide the time interval $[0,3]$ into three intervals $[0,1]$, $[1,2] $ and $[2, 3]$.
    We consider the following three events:%
    \begin{enumerate}
        \item  The probability of one refreshment happening in each of the intervals $[0,1]$ and $[2,3]$, and no refreshment in $[1,2]$, is given by
        $\lambda_{\rho}^2 e^{-3 \lambda_{\rho}}$.
    \item The probability of no flip, given current state $(q,u)$, happening during $[0,3]$ and no leapfrog jump during $[0,1)\cup(2,3]$ is lower bounded by $e^{-5c_\lambda}$.
    \item The probability of one frog jump, given current state $(q,u)$ (therefore the jump intensity is $\lambda_{g}(q,u)$), during $[1, 1+s)$, $s \in (0, 1]$, no frog jump during $(1+ s, 2]$  can be estimated as 
    \begin{align*}
\int_{0}^{1}&\lambda_{g}(q,u) e^{-\lambda_{g}(q,u)s}   e^{-\lambda_{g}(q_1, u_1)(1-s)} \dd s 
 \\ &\geq e^{-c_{\lambda}} \lambda_{g}(q,u)\int_{0}^{1} e^{-\lambda_{g}(q,u)s  }  \dd s = e^{-c_{\lambda}}\left(1 - e^{-\lambda_{g}(q,u)}\right).
    \end{align*}
    \end{enumerate}
Therefore, using the above estimates, we obtain the following:    
\begin{align}    
\mathbb{K}((q, p)\;, B) \ge \varsigma \nu(B),\quad (q, p) \in G,
\end{align}
where \(\nu(\dd(q_1,p_1)) \propto \ind_{G}(q_1,p_1) e^{-K(p_1)} \dd q_1 \dd p_1\) normalized to a probability measure and
\begin{align*}
\varsigma &= (\tilde\pi \otimes \rho)(G) \lambda_{\rho}^2 e^{-3\lambda_{\rho}} \cdot e^{-5 c_\lambda} \cdot\inf_{q,q_1 \in \tilde{G}} \left(1 - e^{-\lambda_g(q, J(q, q_1))}\right)e^{-c_\lambda} \cdot\inf_{q,q_1 \in \tilde{G} }\frac{e^{-\|J(q, q_1)\|^2/2}}{(2\pi)^{d/2}}.
\end{align*}
The last factor in $\varsigma$ comes from the probability to refresh with $u$.
Due to \cref{lemma:taylor}, we have
\begin{align*}
H(q_1, u_1) - H(q,u) = \frac{\epsilon}{2} \langle u, R_2 \rangle + \frac{\epsilon^2}{8}\|R_2\|^2 + \langle \tilde{\delta}, \tilde{R}_1 - R_2\rangle 
\end{align*}
with $\tilde\delta = \epsilon u - \epsilon^2 \nabla U(q)/2$, $ \tilde{R}_1 = \int_{0}^{1}\nabla^2 U(q + \alpha \tilde \delta)\tilde{\delta} (1 -\alpha) \dd \alpha $ and $R_2$ is the same as in \cref{lemma:taylor}.
Since $J$ is a diffeomorphism, it maps compact sets into compact sets, so
\begin{align*}
    \sup_{(q, q_1) \in \tilde{G}^2} \bigg[\frac{\epsilon}{2} \langle u, R_2 \rangle + \frac{\epsilon^2}{8}\|R_2\|^2 + \langle \tilde{\delta}, \tilde{R}_1 - R_2\rangle  \bigg] < \infty
\end{align*}
and hence $\lambda_{g}(q,u) $ is lower bounded from below. A similar argument ensures a lower bound on $e^{-\|u\|^2/2}$. 
Therefore, $\varsigma$ is lower bounded and \eqref{eq:minorization} follows. 
\end{proof}

\subsection{\cref{thm:geometricergodicity}}
The result now follows by \cite[Corollary~4]{roberts_quantitative_1996} after reformulating our results into their framework.
\begin{proof}[of \cref{thm:geometricergodicity}]
    The formulation differs from \eqref{eq:drift} in that the \(\beta\) term vanishes outside the set considered in the minorization condition.
    However, the conditions on \(R\) in the set \(G\) guarantee that for \(x \notin G\) (i.e., \(V(x) > R \ge 2\beta/\alpha\)) we can rewrite \eqref{eq:drift} as
    \begin{align*}
        \cA V(x) &\le -\alpha V(x) + \beta = -\frac{\alpha}{2}V(x) -\frac{\alpha}{2} \left(R + (V(x) - R)\right) + \beta \\
        &\le -\frac{\alpha}{2}V(x) -\frac{\alpha}{2} R + \beta \le -\frac{\alpha}{2}V(x).
    \end{align*}
    Similarly, for \(x \in G\) we can trivially weaken \eqref{eq:drift} to
    \(\cA V(x) \le -\alpha V(x) + \beta \le -\frac{\alpha}{2} V(x) + \beta\).
    Thus, this other formulation of the drift condition is satisfied as well with a smaller \(\alpha\), therefore \cite[Corollary~4]{roberts_quantitative_1996} is applicable, and the statement follows by \cref{prop:drift,prop:minorization}.
\end{proof}

\section{Rejection scaling of BJS: Proof of \cref{prop:bjslikessqrt}}
\begin{proof}[of \cref{prop:bjslikessqrt}]
Without loss of generality \(\frac{\dd\pi}{\dd\tilde\pi} \neq 0\), so using that the momentum component of \(\pi\) is uniform with respect to \(\tilde\pi\), one obtains
\begin{align*}
    \lambda_\invo(q,p) &= \left(-\int \lambda_g(a,b)(\tilde\mu_\mathrm{step} + \tilde\mu_\mathrm{refl})(a,\dd b) + \int \lambda_g(\invo(a),b)(\tilde\mu_\mathrm{step} + \tilde\mu_\mathrm{refl})(\invo(a),\dd b)\right)^+ \\
    &= \left(-\left[g\left(\frac{\frac{\dd\pi}{\dd\tilde\pi}(q + \epsilon p)}{\frac{\dd\pi}{\dd\tilde\pi}(q)}\right) + \epsilon\langle p, \nabla U(q)\rangle^+\right] + \left[g\left(\frac{\frac{\dd\pi}{\dd\tilde\pi}(q - \epsilon p)}{\frac{\dd\pi}{\dd\tilde\pi}(q)}\right) + \epsilon\langle -p, \nabla U(q)\rangle^+\right]\right)^+\\
    &= \left(-g\left(e^{-(U(q + \epsilon p) - U(q))}\right) + g\left(e^{-(U(q - \epsilon p) - U(q))}\right) - \epsilon\langle p, \nabla U(q)\rangle\right)^+.
\end{align*}
Now expand
\begin{equation*}
    U(q + \epsilon p) - U(q) = \epsilon\langle p, \nabla U(q)\rangle + \frac{\epsilon^2}{2}\langle p, \nabla^2U(q) p\rangle + \cO(\epsilon^3)
\end{equation*}
to obtain that
\begin{align*}
    e^{-(U(q + \epsilon p) - U(q))} &= 1 - \epsilon\langle p, \nabla U(q)\rangle + \frac{\epsilon^2}{2}\left[\langle p, \nabla U(q)\rangle^2 - \langle p, \nabla^2U(q) p\rangle\right] + \cO(\epsilon^3).
\end{align*}
In the case of \(g(t) = \sqrt t\), the above expansion yields cancellation of all terms in \(\lambda_\invo\) of lower order than \(\cO(\epsilon^3)\) as claimed.
One similarly concludes from the preceding analysis that without including the bounce kernel, or with the Metropolized flip rate, the result is \(\cO(\epsilon)\).

It remains to show the scaling in the case of \(g(t) = \min\{1,t\}\), which we do by case analysis.
If \(U(q + \epsilon p) \ge U(q) \ge U(q - \epsilon p)\), then
\begin{align*}
    \lambda_\invo(q,p) &= \left(-\min\left\{1, e^{-(U(q + \epsilon p) - U(q))}\right\} + \min\left\{1, e^{-(U(q - \epsilon p) - U(q))}\right\} - \epsilon\langle p, \nabla U(q)\rangle\right)^+ \\
    &= \left(- e^{-(U(q + \epsilon p) - U(q))} + 1 - \epsilon\langle p, \nabla U(q)\rangle\right)^+ = \cO(\epsilon^2).
\end{align*}
If \(U(q + \epsilon p) \le U(q) \le U(q - \epsilon p)\), the same result is obtained as two signs flip.
Finally, the remaining cases \(U(q + \epsilon p) \le U(q) \ge U(q - \epsilon p)\) and \(U(q + \epsilon p) \ge U(q) \le U(q - \epsilon p)\) both imply that \(\langle p, \nabla U(q)\rangle = 0\); in the former case one has \(\lambda_\invo = 0\), and in the latter case one has the same expansion as for \(g(t)=\sqrt t\) with first order terms vanishing.
Thus the worst case scaling for \(g = \min\{1,t\}\) is indeed \(\cO(\epsilon^2)\) as claimed.
\end{proof}
The preceding also holds when extending to stochastic \(\tilde\mu_\mathrm{refl}\) satisfying the martingale condition, since the integrals then yield the same result.

\clearpage
\section{Additional details on experiments}
\Cref{fig:robustness2d-german-W2} shows the claimed convergence in the Bayesian logistic regression example, with non-convergent configurations eliminated for the later ESS estimation.
\begin{figure}[hbtp]
    \figuresize{.66}
    \figurebox{}{\linewidth}{}[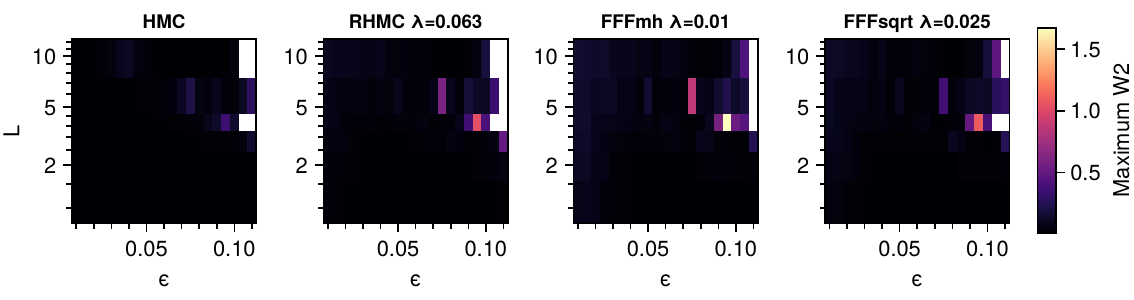]
    \caption{Maximum marginal Wasserstein-2 distance (lower is better) on Bayesian logistic regression target for varying hyperparameters, averaged across replicates.}
    \label{fig:robustness2d-german-W2}
\end{figure}
\begin{figure}[hbtp]
    \figuresize{.66}
    \figurebox{}{\linewidth}{}[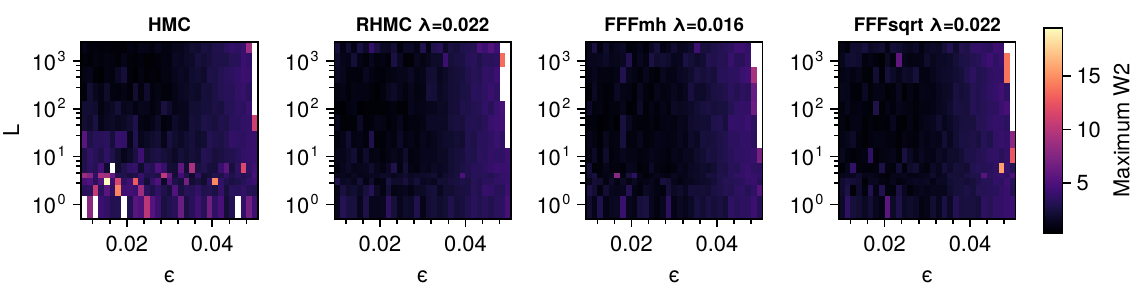]
    \caption{Maximum marginal Wasserstein-2 distance (lower is better) on Rosenbrock banana target for varying hyperparameters, averaged across replicates.}
    \label{fig:robustness2d-banana-W2}
\end{figure}
\begin{figure}[hbtp]%
    \figuresize{.66}
    \figurebox{}{\linewidth}{}[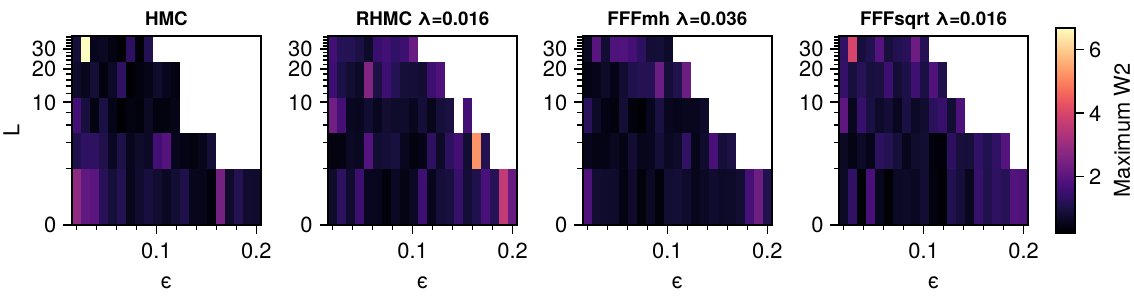]
    \caption{Maximum marginal Wasserstein-2 distance (lower is better) on Bayesian PKPD target for varying hyperparameters, averaged across replicates.}
    \label{fig:robustness2d-posteriordb-W2}
\end{figure}

\Cref{fig:robustness2d-german-ess-lambda,fig:robustness2d-banana-ess-lambda,fig:robustness2d-posteriordb-ess-lambda} shows an alternative slice with fixed \(L\) to illustrate that the robustness is not dependent on a precise \(\lambda_\rho\).
\begin{figure}[hbtp]
    \figuresize{.66}
    \figurebox{}{\linewidth}{}[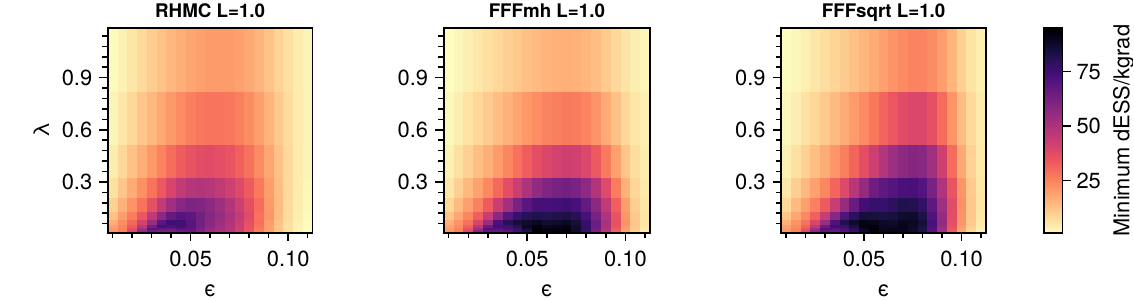]
    \caption{Minimum marginal ESS per \(10^3\) gradient evaluations (higher is better) on Bayesian logistic regression target for varying hyperparameters, averaged across replicates.}
    \label{fig:robustness2d-german-ess-lambda}
\end{figure}
\begin{figure}[hbtp]
    \figuresize{.66}
    \figurebox{}{\linewidth}{}[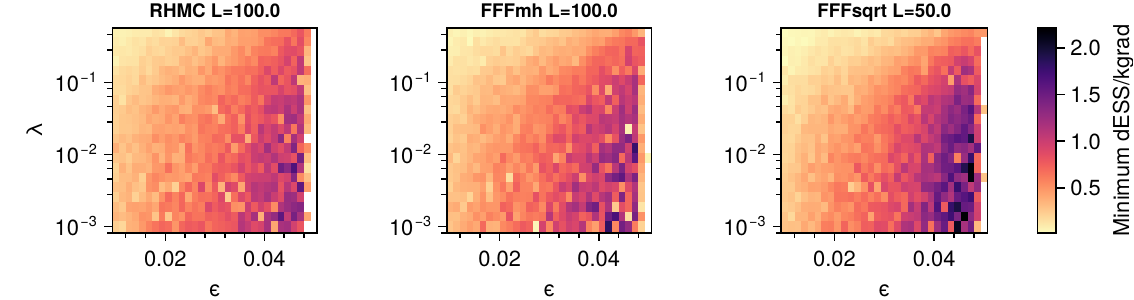]
    \caption{Minimum marginal ESS per \(10^3\) gradient evaluations (higher is better) on Rosenbrock banana target for varying hyperparameters, averaged across replicates.}
    \label{fig:robustness2d-banana-ess-lambda}
\end{figure}
\begin{figure}[hbtp]
    \figuresize{.66}
    \figurebox{}{\linewidth}{}[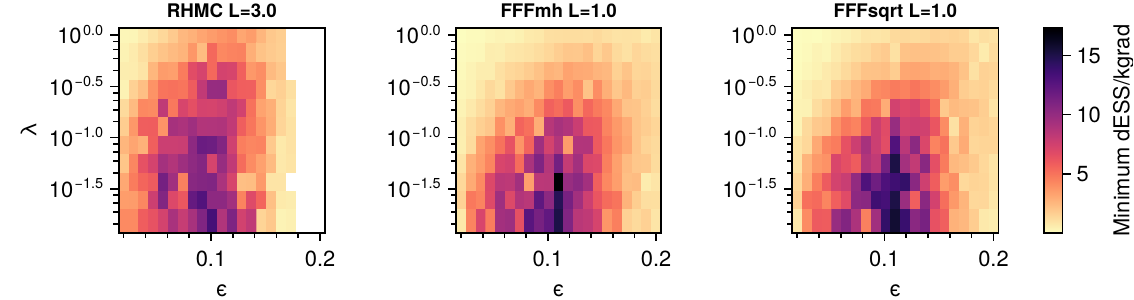]
    \caption{Minimum marginal ESS per \(10^3\) gradient evaluations (higher is better) on Bayesian PKPD target for varying hyperparameters, averaged across replicates.}
    \label{fig:robustness2d-posteriordb-ess-lambda}
\end{figure}

\Cref{tab:robustness-configs} lists the hyperparameter ranges used to produce the figures in the numerical experiments, selected based on preliminary runs. 
\begin{table}[hbtp]
	\tbl{Hyperparameter ranges in numerical experiments.}
{	\begin{tabular}{lllll}
		Experiment & Sampler & \(\epsilon\) & \(L\) & \(\lambda\)\ \\
        Logistic & HMC & \([0.1,0.11]\), 21 linear pts & \(\{1,2,3,4,5,10\}\) & - \\
        Logistic & FFF/RHMC & \([0.1,0.11]\), 21 linear pts & \(\{1,2,3,4,5,10\}\) & \([0.01,1.0]\); 11 log pts \\
        Logistic & BJS & \([0.001, 0.1]\), 31 log pts & - & \([0.001, 1.0]\), 21 log pts\\
		Banana & HMC & \([0.01,0.05]\), 31 linear pts & \(\substack{\{1,2,3,4,5,10,20,50,\\100,200,500,1000,2000\}}\) & - \\
        Banana & FFF/RHMC & \([0.01,0.05]\), 31 linear pts & \(\substack{\{1,2,3,4,5,10,20,50,\\100,200,500,1000,2000\}}\) & \([0.001,0.5]\), 21 log pts \\
        PKPD & HMC & \([0.02,0.2]\), 21 linear pts & \(\{1,3,7,15,31\}\) & - \\
        PKPD & FFF/RHMC & \([0.02,0.2]\), 21 linear pts & \(\{1,3,7,15,31\}\) & \([0.0158,1.0]\), 11 log pts
	\end{tabular}}
	\label{tab:robustness-configs}
\end{table}

\end{document}